\newtheorem{theorem}{Theorem}[section]
\newtheorem{lemma}[theorem]{Lemma}
\newtheorem{proposition}[theorem]{Proposition}
\newtheorem{corollary}[theorem]{Corollary}
\newtheorem{conjecture}[theorem]{Conjecture}
\newenvironment{proof}[1][Proof]{\begin{trivlist}
\item[\hskip \labelsep {\bfseries #1}]}{\end{trivlist}}
\newenvironment{definition}[1][Definition]{\begin{trivlist}
\item[\hskip \labelsep {\bfseries #1}]}{\end{trivlist}}
\newenvironment{remark}[1][Remark]{\begin{trivlist}
\item[\hskip \labelsep {\bfseries #1}]}{\end{trivlist}}
\newcommand{\qed}{\nobreak \ifvmode \relax \else
      \ifdim\lastskip<1.5em \hskip-\lastskip
      \hskip1.5em plus0em minus0.5em \fi \nobreak
      \vrule height0.75em width0.5em depth0.25em\fi}
\begin{document}

\title{Adjustment matrices for the principal block of the Iwahori-Hecke algebra $\mathcal{H}_{5e}$}
\author{Aaron Yi Rui Low}
\date{June 12, 2020}
\maketitle
\textit{Department of Mathematics, National University of Singapore,}

\textit{Block S17, 10 Lower Kent Ridge Road, 119076 Singapore}\par\nopagebreak
E-mail: \textit{aaronlyr94@gmail.com}

\begin{abstract}
James's Conjecture predicts that the adjustment matrix for blocks of the Iwahori-Hecke algebra of the symmetric group is the identity matrix when the weight of the block is strictly less than the characteristic of the field. In this paper, we consider the case when the characteristic of the field is greater than or equal to 5, and prove that the adjustment matrix for the principal block of $\mathcal{H}_{5e}$ is the identity matrix whenever $e\neq4$. When $e=4$, we are able to calculate all but two entries of the adjustment matrix.
\end{abstract}

\section{Introduction}
Suppose $q$ is a non-zero element of a field $\mathbb{F}$. The \textit{Iwahori-Hecke algebra} $\mathcal{H}_{\mathbb{F},q}(\mathfrak{S}_{n})$ of the symmetric group $\mathfrak{S}_{n}$, over $\mathbb{F}$ and with parameter $q$ is the unital associative $\mathbb{F}$-algebra with generators $T_{1},T_{2},\dots,T_{n-1}$ subject to the following relations:
\begin{align*} 
\bullet       &(T_{i}-q)(T_{i}+1)=0,                                 &i\in\{1,2,\dots,n-1\}.\\
\bullet         &T_{i}T_{j}=T_{j}T_{i},                          &\mid i-j\mid>1. \\
\bullet          &T_{i}T_{i+1}T_{i}=T_{i+1}T_{i}T_{i+1},    &i\in\{1,2,\dots,n-2\}.
\end{align*} 
When there is no ambiguity, we denote $\mathcal{H}_{\mathbb{F},q}(\mathfrak{S}_{n})$ by $\mathcal{H}_{n}$ . Let $e$ be the smallest integer such that $1+q+q^{2}+\dots+q^{e-1}=0$, assuming throughout the paper that it exists. If $q=1$, $\mathcal{H}_{n}\simeq\mathbb{F}\mathfrak{S}_{n}$ and $e$ is just the characteristic of $\mathbb{F}$. To each partition $\lambda$ of $n$, we associate a \textit{Specht Module} $S^{\lambda}$ for $\mathcal{H}_{n}$. A partition is \textit{e-singular} if it has $e$ parts of the same size. It is called \textit{e-regular} otherwise. For an $e$-regular partition $\lambda$, $S^{\lambda}$ has an irreducible cosocle $D^{\lambda}$. The set of $D^{\lambda}$ as $\lambda$ ranges over all $e$-regular partitions gives a complete set of distinct irreducible $\mathcal{H}_{n}$-modules. We denote the projective cover of $D^{\lambda}$ by $P^{\lambda}$. The composition factors $[S^{\lambda}:D^{\mu}]=[P^{\mu}:S^{\lambda}]$ are called the \textit{decomposition numbers} of $\mathcal{H}_{n}$. Typically, they are recorded in a \textit{decomposition matrix} with rows indexed by partitions of $n$ and columns indexed by $e$-regular partitions of $n$, whose $(\lambda,\mu)$-entry is $
[S^{\lambda}:D^{\mu}]$. 

One of the most important outstanding problems in the modular representation theory of the symmetric groups is to determine the decomposition numbers. When the field is $\mathbb{C}$, there is an algorithm for calculating the decomposition numbers for the Iwahori-Hecke algebras. It is known that the decomposition matrix for fields of prime characteristic may be obtained from that of $\mathbb{C}$ by post-multiplying by an `adjustment matrix'. Therefore, we often work with adjustment matrices instead of the decomposition matrices directly when the characteristic of the field is prime. Other than for some cases with small weight, there is a great deal not known about the adjustment matrices. James's Conjecture predicts that the adjustment matrix for a block of $\mathcal{H}_{n}$ is the identity matrix when the characteristic of the field is strictly less than the weight of that block. The conjecture has been proven for weights up to four by the works of Richards~\cite{Richards} and Fayers~\cite{wt 4,wt 3}. However, Williamson found a counter-example~\cite{Williamson} to James's Conjecture. Nevertheless, the smallest counter-example produced in his paper occurs in the symmetric group $\mathfrak{S}_{n}$ where $n=1744860$. There is considerable interest in finding smaller counter-examples. 

In section 3, we prove that the adjustment matrix for the principal block of $\mathcal{H}_{5e}$ is the identity matrix when $\textnormal{char}(\mathbb{F})\ge5$ and $e\neq4$. When $\textnormal{char}(\mathbb{F})\ge5$ and $e=4$, we show that all but 2 off-diagonal entries of the adjustment matrix for the principal block of $\mathcal{H}_{20}$ are zero; these 2 entries are not yet known and may be explored in the future. Nevertheless, if one is interested in specialising to the symmetric groups, it is sufficient to consider $e\ge5$. It is hoped that some of the techniques used in this paper could be generalised to higher weights. In section 3, the case of $\mathcal{H}_{25}$ when $\textnormal{char}(\mathbb{F})=5$ is perhaps the most interesting. In this case, the defect group of the principal block of $\mathcal{H}_{25}=\mathbb{F}\mathfrak{S}_{25}$ is not Abelian, and experts expect it to behave differently from $\mathcal{H}_{25}$ in characteristic zero. On the other hand, Fayers's extension of James's Conjecture~\cite[Conjecture 3.1]{extended James} suggests that the decomposition numbers of these two blocks are the same. In the next section, we lay the groundwork that we need for section 3. 
\section{Background and techniques}
\subsection{Blocks of $\mathcal{H}_{n}$ and abacus displays}
Take an abacus with $e$ vertical runners, numbered $0,\dots,e-1$ from left to right, marking positions $0,1,\dots$ on the runners increasing from left to right along successive `rows'. Given a partition $\lambda$ of $n$, take an integer $r\ge\lambda_{1}'$, the number of parts of $\lambda$. Define $\beta_{i}=\lambda_{i}+r-i$ for $i\in\{1,\dots,r\}$. Now, place a bead at position $\beta_{i}$ for each $i$. The resulting configuration is called the abacus display for $\lambda$. We remark that moving a bead up one place on its runner is akin to removing an $e$-hook from the young diagram of $\lambda$. By moving all the beads as high as possible on their runners, the resulting configuration is the abacus display for the \textit{e-core} of $\lambda$. 

\begin{theorem} (Nakayama Conjecture, ~\cite[Corollary 5.38]{mathas book})
Let $\lambda$ and $\mu$ be partitions of $n$. Then, $S^{\lambda}$ and $S^{\mu}$ lie in the same block of $\mathcal{H}_{n}$ if and only if $\lambda$ and $\mu$ have the same $e$-core.
\end{theorem}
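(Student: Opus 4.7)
The plan is to analyse the centre of $\mathcal{H}_n$ via Jucys-Murphy elements, use it to show that lying in the same block forces equality of a certain residue multiset, and then translate equality of residue multisets into equality of $e$-cores using the abacus display from the preceding paragraph.

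First I would recall the Jucys-Murphy elements $L_1,\dots,L_n\in\mathcal{H}_n$; any symmetric polynomial in them lies in $Z(\mathcal{H}_n)$, and a standard computation on the Murphy cellular basis shows that such a polynomial acts on $S^\lambda$ by the scalar obtained from evaluating it at the multiset of $q$-contents $\{q^{j-i}:(i,j)\in\lambda\}$. Since $e$ is minimal with $1+q+\cdots+q^{e-1}=0$, the value $q^{j-i}$ depends only on $(j-i)\bmod e$, so the central character of $S^\lambda$ is determined by the residue multiset $R(\lambda):=\{(j-i)\bmod e : (i,j)\in\lambda\}$. Two Specht modules in the same block share a central character, and so $S^\lambda$ and $S^\mu$ being in the same block forces $R(\lambda)=R(\mu)$.

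Next I would connect $R(\cdot)$-equality to $e$-core equality using the abacus. The key combinatorial observation is that the $e$ nodes of a rim $e$-hook have residues forming the complete set $\{0,1,\dots,e-1\}$, so removing such a hook subtracts this full cycle from $R(\lambda)$. Iterating down to the core yields
\[ R(\lambda)=R(\mathrm{core}(\lambda))+w(\lambda)\cdot\{0,1,\dots,e-1\}, \]
where $w(\lambda)$ is the $e$-weight. Since $|\lambda|=|\mu|$ gives $w(\lambda)=w(\mu)$, equality of $R$ reduces to equality of $R$ on the cores, and a direct abacus check shows that the residue content of an $e$-core determines the number of beads on each runner (for a fixed total bead count), hence pins down the core uniquely. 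This yields the forward direction. For the converse, I would fix the $e$-core $c$ and argue by induction on the weight: given $\lambda$ of weight $w>0$ with core $c$, I would produce a residue $i$ and a partition $\nu$ of weight $w-1$ with core $c$ such that the $i$-induction $f_i S^\nu$ has $S^\lambda$ as a composition factor, and then invoke the fact that $f_i$ preserves blocks to link $S^\lambda$ to $S^c$ (and hence to $S^\mu$) inside a single block.

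The main obstacle will be the converse direction: the central character computation gives only a necessary condition, and producing a concrete chain of block-linked Specht modules with the same core requires the machinery of $i$-induction/restriction together with a careful bookkeeping of which residues can be added at each stage, via the bijection between addable $e$-hooks and permissible bead slides on the abacus.
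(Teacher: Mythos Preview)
The paper does not prove this theorem: it is quoted as background with a citation to Mathas's book and no argument is supplied. There is therefore no proof in the paper to compare your proposal against.

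That said, your sketch follows one of the standard routes and is largely sound. One logical slip: you write ``since $|\lambda|=|\mu|$ gives $w(\lambda)=w(\mu)$'', but equal size only yields $|\mathrm{core}(\lambda)|+e\,w(\lambda)=|\mathrm{core}(\mu)|+e\,w(\mu)$, which does not force the weights to agree until you already know the two cores have the same size. The repair is precisely the ``direct abacus check'' you mention in the next clause: the residue multiset of $\lambda$ already determines the number of beads on each runner (for a fixed total bead count), hence the core, without first isolating the weight --- so run that argument directly rather than trying to peel off $w(\lambda)\cdot\{0,\dots,e-1\}$ first. For the converse, your induction plan works once you note that, by the forward direction, the block reached by applying $f_{i_1}\cdots f_{i_e}$ depends only on the multiset $\{i_1,\dots,i_e\}$; since every rim $e$-hook contributes each residue exactly once, all weight-$w$ partitions with core $c$ are obtained from $S^{c}$ inside a single block of $\mathcal{H}_{|c|+we}$.
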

Therefore, we may define the $e$-weight and $e$-core of a block of $\mathcal{H}_{n}$ simply to be the $e$-weight and $e$-core of a partition lying in that block. 
Let $\lambda(i)$ be the partition corresponding to the abacus display containing only a single runner, the $i^{th}$ runner. Denote the number of beads in the $i^{th}$ runner as $b_{i}$. Then, we may write $\lambda$ as $$\langle 0_{\lambda(0)},\dots,(e-1)_{\lambda(e-1)} \mid b_{0},\dots,b_{e-1}\rangle;$$
we omit $i_{\lambda(i)}$ if $\lambda(i)=\varnothing$ and omit $\lambda(i)$ if $\lambda(i)=(1)$. Additionally, we may omit $b_{0},\dots,b_{e-1}$ if it is clear which block we are dealing with. If $\lambda$ lies in the block $B$ of $\mathcal{H}_{n}$, we say that $B$ is the block of $e$-weight $w$ with the $\langle b_{0},\dots,b_{e-1}\rangle$ notation.

\subsection{Modular Branching Rules}
We use some notational conventions for modules. We write
$$M\sim M_{1}^{a_{1}}+M_{2}^{a_{2}}+\dots+M_{r}^{a_{r}}$$
to indicate that $M$ has a filtration in which the factors are $M_{1},\dots,M_{r}$ appearing $a_{1},\dots,a_{r}$ times respectively. Additionally, we write $M^{\oplus a}$ to indicate the direct sum of $a$ isomorphic copies of $M$.

There is a natural embedding $\mathcal{H}_{n-1}\le \mathcal{H}_{n}$. If $M$ is a module for $\mathcal{H}_{n}$, the restriction of M to $\mathcal{H}_{n-r}$ is denoted by $M\downarrow_{\mathcal{H}_{n-r}}$. Similarly, the induction of M to $\mathcal{H}_{n+r}$ is denoted by $M\uparrow_{\mathcal{H}_{n+r}}$. If $B$ is a block of $\mathcal{H}_{n-r}$, we write $M\downarrow_{B}$ to indicate the projection of $M\downarrow_{\mathcal{H}_{n-r}}$ onto $B$. Similarly, if $C$ is a block of $\mathcal{H}_{n+r}$, we write $M\uparrow_{B}$ to indicate the projection of $M\uparrow_{\mathcal{H}_{n+r}}$ onto $C$. In this section, we describe the restriction and induction of Specht modules and simple modules.

Suppose $A$, $B$ and $C$ are blocks of $\mathcal{H}_{n-\kappa}$, $\mathcal{H}_{n}$ and $\mathcal{H}_{n+\kappa}$ respectively, and that there is an integer $i$ such that an abacus display for $A$ is obtained from that of $B$ by moving exactly $\kappa$ beads from runner $i$ to runner $i-1$, while an abacus display for $C$ is obtained from that of $B$ by moving exactly $\kappa$ beads from runner $i-1$ to runner $i$.

Suppose $\lambda$ is a partition in $B$, and that $\lambda^{-1},\lambda^{-2},\dots,\lambda^{-r}$ are the partitions in $A$ that may be obtained from $\lambda$ by moving exactly $\kappa$ beads on runner $i$ one place to the left. Similarly, let $\lambda^{+1},\lambda^{+2},\dots,\lambda^{+r}$ be the partitions in $C$ that may be obtained from $\lambda$ by moving exactly $\kappa$ beads on runner $i-1$ one place to the right. We have the following result.

\begin{theorem}(The Branching Rule ~\cite[Corollary 6.2]{mathas book})
Suppose $A$, $B$, $C$ and $\lambda$ are as above. Then,
$$S^{\lambda}\downarrow^{B}_{A} \sim (S^{\lambda^{-1}})^{\kappa !}+ (S^{\lambda^{-2}})^{\kappa !}+\dots+ (S^{\lambda^{-r}})^{\kappa !}$$ and
$$S^{\lambda}\uparrow^{C}_{B} \sim (S^{\lambda^{+1}})^{\kappa !}+ (S^{\lambda^{+2}})^{\kappa !}+\dots+ (S^{\lambda^{+r}})^{\kappa !}.$$
\end{theorem}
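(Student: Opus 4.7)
The plan is to prove the theorem by induction on the weight-change $\kappa$, with the base case $\kappa=1$ being the classical branching rule projected onto a block, and the inductive step exploiting transitivity of restriction through an intermediate block.

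For the base case $\kappa=1$, the classical branching rule for Specht modules over $\mathcal{H}_n$ (Mathas, Theorem 6.1) gives $S^\lambda \downarrow_{\mathcal{H}_{n-1}}$ a filtration whose factors are $S^{\lambda\setminus R}$, indexed by removable nodes $R$ of $\lambda$. Projecting onto $A$ kills every factor whose underlying partition lies outside $A$, and by the abacus interpretation of node-removal, the surviving removals are exactly those that move a single bead from runner $i$ to runner $i-1$. This yields the $\kappa=1$ case with multiplicity $1=1!$.

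For the inductive step, suppose the theorem holds for $\kappa-1$, and let $B'$ be the block of $\mathcal{H}_{n-\kappa+1}$ whose abacus display is obtained from that of $B$ by moving $\kappa-1$ beads from runner $i$ to runner $i-1$ (this block is uniquely determined because the $e$-core is determined by the bead counts on each runner). Restriction is exact and transitive, so I would write $S^\lambda\downarrow_A^B = \bigl(S^\lambda\downarrow_{B'}^B\bigr)\downarrow_A^{B'}$. The inductive hypothesis provides a filtration of $S^\lambda\downarrow_{B'}^B$ with factors $(S^\mu)^{(\kappa-1)!}$, where $\mu$ ranges over partitions in $B'$ reachable from $\lambda$ by moving $\kappa-1$ beads on runner $i$ one step to the left. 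Applying the base case to each $S^\mu\downarrow_A^{B'}$ and assembling, I obtain a filtration of $S^\lambda\downarrow_A^B$ whose Specht factors $S^\nu$ occur with multiplicity $(\kappa-1)!\cdot N_\nu$, where $N_\nu$ counts the intermediate $\mu$'s in $B'$ fitting into a chain $\lambda\to\mu\to\nu$.

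The combinatorial heart is then the count $N_\nu=\kappa$. Passage from $\lambda$ to any fixed $\nu=\lambda^{-j}$ moves $\kappa$ specific beads on runner $i$ one place to the left; an intermediate $\mu$ corresponds to singling out which one of these $\kappa$ moves is to be performed last, and different choices produce distinct $\mu$ because the $\kappa$ beads sit at distinct positions. Hence the total multiplicity is $(\kappa-1)!\cdot\kappa=\kappa!$, as desired. The induction statement is proved by the mirror-image argument, moving beads from runner $i-1$ to runner $i$.

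The principal obstacle is the bookkeeping in the inductive step: one must verify that the intermediate $\mu$'s are genuinely distinct, that every relevant $\mu$ in $B'$ arises this way, and that composing a Specht filtration of $S^\lambda\downarrow_{B'}^B$ with Specht filtrations of each factor $S^\mu\downarrow_A^{B'}$ does yield a Specht filtration of $S^\lambda\downarrow_A^B$ with the claimed multiplicities. The first two points are clear from careful tracking on the abacus; the third relies on the exactness of $\downarrow_A^{B'}$, which refines the outer filtration into a filtration whose successive quotients are precisely the Specht modules supplied by the inner filtrations. For readers who prefer a more conceptual route, an alternative would be to invoke the divided-power refined restriction functor $e_i^{(\kappa)}$ on the Fock space and use that it sends Specht modules to objects with Specht filtrations of the stated shape; the combinatorial factor $\kappa!$ then reflects the relation $e_i^\kappa = \kappa!\cdot e_i^{(\kappa)}$.
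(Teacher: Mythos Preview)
The paper does not prove this statement; it is cited from Mathas's book (Corollary~6.2) as a background result and stated without argument. Your inductive derivation from the one-step branching rule is correct and is essentially the standard route to this corollary.

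One step deserves an extra sentence of justification. You write $S^\lambda\downarrow^B_A = (S^\lambda\downarrow^B_{B'})\downarrow^{B'}_A$ and attribute it to transitivity and exactness of restriction, but transitivity alone only gives $S^\lambda\downarrow^B_A = \bigoplus_{B''} (S^\lambda\downarrow^B_{B''})\downarrow^{B''}_A$, summed over all blocks $B''$ of $\mathcal{H}_{n-\kappa+1}$. To see that only $B''=B'$ contributes, note that any sequence of $\kappa$ node-removals taking the $e$-core of $B$ to that of $A$ must consist entirely of moves from runner $i$ to runner $i-1$: each removal shifts one bead from some runner $r$ to runner $r-1$, and the only way the net change in bead counts can be $-\kappa$ on runner $i$, $+\kappa$ on runner $i-1$, and zero elsewhere is for every one of the $\kappa$ moves to involve runner $i$. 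Hence the block at stage $\kappa-1$ is forced to be $B'$, and every other $B''$ either receives nothing from $B$ or sends nothing to $A$. With this in hand, your multiplicity count $N_\nu=\kappa$ and the refinement of filtrations via exactness finish the proof cleanly.
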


For the discussion of the restriction and induction of simple modules, we assume that $\lambda$ is $e$-regular. The \textit{i-signature} of $\lambda$ is the sequence of signs defined as follows. Starting from the top row of the abacus display for $\lambda$ and working downwards, write a $-$ if there is a bead on runner $i$ but no bead on runner $i-1$; write a $+$ if there is a bead on runner $i-1$ but no bead on runner $i$; write nothing for that row otherwise. Given the $i$-signature of $\lambda$, successively delete all neighbouring pairs of the form $-+$ to obtain the \textit{reduced i-signature} of $\lambda$. If there are any $-$ signs in the reduced $i$-signature of $\lambda$, we call the corresponding beads on runner $i$ \textit{normal}; if there are at least $\kappa$ normal beads, then we define $\lambda^{-}$ to be the partition obtained by moving the $\kappa$ highest normal beads one place to the left. If there are any $+$ signs in the reduced $i$-signature, we call the corresponding beads on runner $i-1$ \textit{conormal}; if there are at least $\kappa$ conormal beads, then we define $\lambda^{+}$ to be the partition obtained by moving the $\kappa$ lowest conormal beads one place to the right.
\begin{theorem}(~\cite[\S 2.5]{branching})
Suppose $A$,$B$ and $\lambda$ are as above.
\begin{itemize}
\item If there are fewer than $\kappa$ normal beads on runner $i$ of the abacus display for $\lambda$, then $D^{\lambda}\downarrow^{B}_{A}=0$.
\item If there are exactly $\kappa$ normal beads on runner $i$ of the abacus display for $\lambda$, then $D^{\lambda}\downarrow^{B}_{A}\cong(D^{\lambda^{-}})^{\oplus\kappa !}$.
\item If there are fewer than $\kappa$ conormal beads on runner $i-1$ of the abacus display for $\lambda$, then $D^{\lambda}\uparrow^{C}_{B}=0$.
\item If there are exactly $\kappa$ conormal beads on runner $i-1$ of the abacus display for $\lambda$, then $D^{\lambda}\uparrow^{C}_{B}\cong(D^{\lambda^{+}})^{\oplus\kappa !}$.
\end{itemize}
\end{theorem}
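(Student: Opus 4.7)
The plan is to prove the theorem by embedding the problem into the framework of the categorical $\widehat{\mathfrak{sl}}_e$-action on $\bigoplus_n K_0(\mathcal{H}_n\text{-mod})$ afforded by Ariki's theorem. In this setup, $\downarrow^B_A$ and $\uparrow^C_B$ categorify divided powers of the Chevalley generators $e_i$ and $f_i$, and the $e$-regular partitions label a crystal basis whose combinatorics is encoded exactly by the reduced $i$-signature defined just before the statement.

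First, I would reduce to the case $\kappa=1$. Factor $\downarrow^B_A$ through a chain of intermediate blocks $B = B_0, B_1, \ldots, B_\kappa = A$, where each $B_{j+1}$ is obtained from $B_j$ by moving one bead on runner $i$ one place to the left. Writing $E_i$ for single-bead $i$-restriction followed by block projection, the iterated composition $E_i^\kappa$ from $B$ to $A$ is naturally isomorphic to $\kappa!$ copies of a single divided-power functor $E_i^{(\kappa)}$, obtained by idempotent truncation using the natural $\mathfrak{S}_\kappa$-action carried by the affine Hecke algebra on $\kappa$ strands inside $\mathcal{H}_n$. A dual argument handles induction via $F_i^{(\kappa)}$, and the $\kappa!$ factor in the conclusion is precisely this combinatorial multiplicity.

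Second, for $\kappa=1$ I would follow the Kleshchev--Brundan--Grojnowski strategy. The key steps are: (i) observe that $E_i D^\lambda$ is self-dual under the standard anti-involution of $\mathcal{H}_n$ that fixes each simple; (ii) compute the class of $E_i D^\lambda$ in the Grothendieck group by applying the Specht branching rule of Theorem 2.2 to a Specht filtration built from $D^\lambda$ through the decomposition matrix; (iii) identify the socle of $E_i D^\lambda$ with $D^{\lambda^-}$, where $\lambda^-$ is obtained by moving the highest normal bead on runner $i$, via a crystal-theoretic argument showing this is the only simple admitting a nonzero map into $E_i D^\lambda$. Self-duality then forces head $=$ socle $=D^{\lambda^-}$, and the computation in (ii) rules out any further composition factors, so that $E_i D^\lambda \cong D^{\lambda^-}$ when $\lambda$ has a normal bead on runner $i$ and is $0$ otherwise.

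Third, I would iterate: $E_i^\kappa D^\lambda$ projected to $A$ is nonzero only if each successive application of $E_i$ lands on a module still possessing a normal bead, which happens precisely when $\lambda$ has at least $\kappa$ normal beads on runner $i$. In the boundary case of exactly $\kappa$ normal beads, the iterated restriction produces $D^{\lambda^-}$ (with $\lambda^-$ now obtained by removing all $\kappa$ highest normal beads in succession), and combining with the $\kappa!$ factor from the first step yields the claimed $(D^{\lambda^-})^{\oplus \kappa!}$; fewer than $\kappa$ normal beads forces the composition to vanish. The main obstacle is step (iii) of the single-bead argument: the identification of the simple socle rests on the full combinatorial machinery of the crystal graph on $e$-regular partitions and a delicate comparison of composition multiplicities under $E_i$, which is precisely the heavy lifting done in \cite{branching}.
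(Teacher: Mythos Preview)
The paper does not prove this theorem at all: it is quoted verbatim from \cite{branching} (Brundan--Kleshchev) and used as a black box, so there is no ``paper's own proof'' to compare against. Your sketch is a reasonable outline of the argument in that reference, and your closing remark correctly identifies where the real work lies.

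That said, one point in your outline is not quite right and would need repair in a genuine write-up. In your step~2 you conclude that $E_i D^\lambda \cong D^{\lambda^{-}}$ whenever $\lambda$ has a normal bead on runner~$i$; this is false unless there is \emph{exactly one} normal bead. In general $E_i D^\lambda$ has simple head and socle $D^{\tilde e_i\lambda}$ but composition multiplicity $[E_i D^\lambda : D^{\tilde e_i\lambda}] = \varepsilon_i(\lambda)$, so it is not simple once $\varepsilon_i(\lambda)\ge 2$. Consequently your iteration in step~3 breaks down: starting from $\varepsilon_i(\lambda)=\kappa\ge 2$, a single application of $E_i$ already produces a non-simple module, and one cannot simply ``apply $E_i$ again to $D^{\lambda^{-}}$''. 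The correct route is to work directly with the divided power functor $E_i^{(\kappa)}$, prove that $E_i^{(\kappa)}D^\lambda$ is simple (namely $D^{\lambda^{-}}$) when $\varepsilon_i(\lambda)=\kappa$ and zero when $\varepsilon_i(\lambda)<\kappa$, and then invoke $E_i^\kappa \cong (E_i^{(\kappa)})^{\oplus \kappa!}$. This is indeed the structure of the proof in \cite{branching}, so your overall plan is on target, but the intermediate claim about $E_i D^\lambda$ being simple should be weakened.
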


\subsection{$v$-decomposition numbers}
Let $\mathcal{P}$ be the set of all partitions. Let the quantum affine algebra, $U_{v}(\hat{\mathfrak{sl}}_{e})$ be the associative algebra over $\mathbb{C}(v)$ with generators $e_{i},f_{i},k_{i},k^{-1}_{i} (0\le i \le e-1),d,d^{-1}$ subject to some relations (see ~\cite[\S4]{LLT}). The Fock space representation $\mathcal{F}$ is the $U_{v}(\hat{\mathfrak{sl}}_{e})$-module with basis $\{s(\mu) : \mu\in\mathcal{P}\}$ as a $\mathbb{C}(v)$-vector space. Let $L$ be the free $\mathbb{Z}[v]$-lattice in $\mathcal{F}$ generated by $\{s(\nu) : \nu\in\mathcal{P}\}$. Moreover, let $x\mapsto \overline{x}$ be the bar involution on $\mathcal{F}$ (see ~\cite[\S6]{LLT}) having the following (among other) properties:
\begin{itemize}
\item$\overline{b(v)x}=b(v^{-1})\overline{x}$   \hspace{1cm}   $\forall b(v)\in\mathbb{C}(v)$,\hspace{7mm}   $\forall   x\in\mathcal{F}$.
\item$\overline{f_{i}(x)}=f_{i}(\overline{x})$   \hspace{1cm}   $\forall x\in\mathcal{F}$.
\end{itemize}
$\mathcal{F}$ has a distinguished basis $\{G(\mu)\mid \mu\in\mathcal{P}\}$, called the canonical
basis satisfying:
\begin{itemize}
\item $\overline{G(\mu)}=G(\mu)$
\item $G(\mu)\equiv s(\mu)$ mod $vL$.
\end{itemize}
The \textit{v-decomposition number} $d^{(e)}_{\lambda\mu}(v)$ is the coefficient of $s(\lambda)$ in $G(\mu)$. 
Lascoux, Leclerc and Thibon have come up with the LLT algorithm ~\cite{llt}, a recursive algorithm for computing the canonical basis. The following theorem due to Ariki explains the connection between the $v$-decomposition numbers and the decomposition numbers.

\begin{theorem} ~\cite[Theorem 4.4]{ariki}
Let $\lambda$ and $\mu$ be partitions of $n$, with $\mu$ $e$-regular. Then,
$$[S^{\lambda}_{\mathbb{C},\zeta}:D^{\mu}_{\mathbb{C},\zeta}]=d^{(e)}_{\lambda\mu}(1).$$
\end{theorem}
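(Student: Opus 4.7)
The plan is to recognise that the theorem is Ariki's proof of the Lascoux--Leclerc--Thibon conjecture, and to prove it by identifying the Grothendieck group of projective $\mathcal{H}_n$-modules, summed over all $n$, with the specialisation at $v=1$ of the Fock space, under an isomorphism that sends $[P^\mu]$ to $G(\mu)|_{v=1}$ and $[S^\lambda]$ to $s(\lambda)$. Granting such an identification, the theorem is immediate, since the multiplicity $[S^\lambda : D^\mu]$ equals the coefficient of $[S^\lambda]$ in $[P^\mu]$ by Brauer--Nesbitt reciprocity, and this is precisely the coefficient of $s(\lambda)$ in $G(\mu)|_{v=1}$, that is, $d^{(e)}_{\lambda\mu}(1)$.

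First I would put a $U(\widehat{\mathfrak{sl}}_e)$-module structure on $K:=\bigoplus_{n\ge 0} K_0(\mathcal{H}_n\text{-proj})_{\mathbb{C}}$ using the $i$-induction and $i$-restriction functors between blocks described in Theorems~1.2 and~1.3 above: Grojnowski and Kleshchev's refinement of the branching rules shows that the operators $f_i$ and $e_i$ defined by induction/restriction across an $i$-pair of blocks satisfy the Serre relations of $\widehat{\mathfrak{sl}}_e$. I would then prove that, with $[P^{\varnothing}]\in K_0(\mathcal{H}_0)$ as cyclic vector, $K$ is an integrable highest weight module of level one with highest weight $\Lambda_0$. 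A general character-theoretic argument (matching weight-space dimensions with partition counts, which can be done via the abacus) then gives a $U(\widehat{\mathfrak{sl}}_e)$-module isomorphism $K \cong \mathcal{F}|_{v=1}$ sending $[S^\lambda]\mapsto s(\lambda)$; this uses the fact that $S^\lambda\uparrow$ and $S^\lambda\downarrow$ are described in the Fock space by the combinatorics of adding and removing beads, matching Theorem~1.2.

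Next I would show that under this isomorphism $[P^\mu]$ is sent to $G(\mu)|_{v=1}$. The canonical basis element $G(\mu)$ is characterised among bar-invariant elements of the Fock space by the property $G(\mu)\equiv s(\mu)\pmod{vL}$; at $v=1$ this translates into: $[P^\mu]$ should be the unique element of $K$ that (a) lies in a suitable lattice with leading term $[S^\mu]$, (b) expands in the $[S^\lambda]$-basis with non-negative integer coefficients, and (c) is fixed by an involution corresponding to the bar involution. Properties (a) and (b) are immediate from the fact that $P^\mu$ has a Specht filtration with $S^\mu$ appearing once on top. For (c), the crucial input is a geometric or categorical model producing a bar involution on $K$ that intertwines with the bar involution on $\mathcal{F}$; this is where Ariki invokes the Kashiwara--Tanisaki geometric construction of the canonical basis for integrable highest weight modules of affine quantum groups, identifying simple modules over $\mathcal{H}_n$ at roots of unity (equivalently, over cyclotomic affine Hecke algebras) with intersection cohomology of certain affine flag variety strata.

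The hard part will be step three: establishing that $[P^\mu]$ is bar-invariant in the sense required, which is equivalent to proving that the classes $[D^\mu]$ coincide with the image of Kashiwara's lower crystal basis at $v=1$. This is exactly the content of Ariki's theorem and cannot be reduced to elementary combinatorics; every route I know passes through either the geometric realisation via perverse sheaves on affine flags (Kashiwara--Tanisaki, Ginzburg), or the categorification via cyclotomic KLR algebras (a later perspective). Once this identification of $[P^\mu]$ with $G(\mu)|_{v=1}$ is in hand, reading off the coefficient of $[S^\lambda]$ on both sides yields the claimed equality $[S^\lambda_{\mathbb{C},\zeta}:D^\mu_{\mathbb{C},\zeta}] = d^{(e)}_{\lambda\mu}(1)$.
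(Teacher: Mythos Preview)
The paper does not give a proof of this statement at all: it is quoted as a theorem of Ariki with a citation to \cite[Theorem 4.4]{ariki}, and the text immediately moves on to the consequence that the LLT algorithm computes the decomposition matrix over $\mathbb{C}$. So there is nothing in the paper to compare your argument against.

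Your outline is a faithful high-level summary of Ariki's strategy: put a $U(\widehat{\mathfrak{sl}}_e)$-module structure on $\bigoplus_n K_0(\mathcal{H}_n\text{-proj})$ via $i$-induction and $i$-restriction, identify it with the level-one Fock space specialised at $v=1$, and then show that the classes $[P^\mu]$ go to $G(\mu)|_{v=1}$ using the geometric (perverse sheaf / Kashiwara--Tanisaki) realisation of the canonical basis. You are also right to flag that the third step is the substantive one and is not elementary. As written, however, what you have is a proof \emph{sketch} rather than a proof: the identification of $[P^\mu]$ with the specialised canonical basis element is asserted via an appeal to ``bar-invariance'' on $K$, but you do not construct that involution on the representation-theoretic side, and indeed Ariki's argument does not proceed quite that way --- the matching is obtained by comparing both sides to the geometric construction of simple perverse sheaves, not by verifying a bar-fixed-plus-lattice characterisation directly inside $K$. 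If you intend this as a pointer to the literature, it is fine; if you intend it as a self-contained proof, the gap is precisely the step you yourself identify as ``the hard part''. For the purposes of this paper the correct move is exactly what the author does: cite Ariki and move on.
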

Consequently, the decomposition matrix for $\mathcal{H}_{\mathbb{C},\zeta}(\mathfrak{S}_{n})$ can be computed by the LLT algorithm.

Fix any field $\mathbb{F}$. Let $\mathcal{G}^{p}_{n}(\mathbb{F})$ be the \textit{Grothendieck group} (see ~\cite[Chapter 6, \S1.1]{mathas book}) of finitely generated projective $\mathcal{H}_{\mathbb{F},q}(\mathfrak{S}_{n})$-modules with complex coefficients; that is the additive abelian group (with complex coefficients) generated by the symbols $[\![P]\!]^{p}$, where $P$ runs over the isomorphism classes of finitely generated projective $\mathcal{H}_{n}$-modules. These elements satisfy the relations $[\![P]\!]^{p}=[\![M]\!]^{p}+[\![N]\!]^{p}$ whenever $P=M\oplus N$. Therefore, the set of $[\![P^{\lambda}_{\mathbb{F}}]\!]^{p}$ as $\lambda$ runs over all $e$-regular partitions of $n$ forms a basis of $\mathcal{G}^{p}_{n}(\mathbb{F})$.

Let $\mathcal{E}_{n}$ be the complex vector space with basis the set of symbols $[\![S^{\nu}]\!]$ where $\nu$ runs over all partitions of $n$. ($\mathcal{E}_{n}$ is the Grothendieck group of a semi-simple Iwahori-Hecke algebra.) Recall that $[S^{\lambda}_{\mathbb{F}}:D^{\mu}_{\mathbb{F}}]=[P^{\mu}_{\mathbb{F}}:S^{\lambda}_{\mathbb{F}}]$. There is an injective homomorphism of abelian groups $\textbf{e}_{\mathbb{F}}:\mathcal{G}^{p}_{n}(\mathbb{F})\rightarrow\mathcal{E}_{n}$ determined by
$$\textbf{e}_{\mathbb{F}}[\![P^{\lambda}_{\mathbb{F}}]\!]^{p}=\sum\limits_{\nu\vdash n}[S^{\nu}_{\mathbb{F}}:D^{\lambda}_{\mathbb{F}}][\![S^{\nu}]\!].$$

Suppose that $A$ and $B$ are blocks of $\mathcal{H}_{n}$ and $\mathcal{H}_{n+1}$ respectively, and that an abacus display with $r$ beads for $B$ is obtained from that for $A$ by moving a bead from runner $k-1$ to runner $k$. Let $i$ be the residue of $k-r$ modulo $e$. We define \textit{i}-Ind to be the group homomorphism from $\mathcal{G}^{p}_{n}(\mathbb{F})$ to $\mathcal{G}^{p}_{n+1}(\mathbb{F})$ taking $[\![P]\!]^{p}$ to $[\![P\uparrow_{A}^{B}]\!]^{p}$. By abusing notation, we also refer to \textit{i}-Ind as the group homomorphism from $\mathcal{E}_{n}$ to $\mathcal{E}_{n+1}$ taking $[\![S^{\nu}]\!]$ to $[\![S^{\nu}\uparrow_{A}^{B}]\!]$.

We now describe the action of $f_{i}$ on $s(\lambda)$. Display $\lambda$ on an abacus with $e$ runners and $r$ beads, where $r\ge \lambda_{1}'$. Let $k$ be the residue class of $(i+r)$ modulo $e$. Suppose there is a bead on runner $k-1$ whose succeeding position on runner $k$ is vacant. Let $\mu$ be the partition whose abacus display is obtained by moving such a bead to its succeeding position. Define $N_{i}(\lambda,\mu)$ to be the number of beads on runner $k-1$ below the bead moved to obtain $\mu$ minus the number of beads on runner $k$ below the vacant position that becomes occupied in obtaining $\mu$. Then,
$$f_{i}(s(\lambda))=\sum\limits_{\mu}v^{N_{i}(\lambda,\mu)}s(\mu).$$
Note that when $v=1$, $f_{i}$ acts in the same way as \textit{i}-Ind on $\mathcal{E}_{n}$.
\begin{proposition}(~\cite[Proposition 2.4]{positive})\\
If we write $f_{i}(G(\mu))$ in the form $$f_{i}(G(\mu))=\sum\limits_{\nu}a_{\nu}(v)G(\nu),$$ then $a_{\nu}(v)\in\mathbb{N}_{0}[v+v^{-1}]$ for all $\nu$.
\end{proposition}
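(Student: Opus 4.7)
My plan is to decompose the claim into three parts: (i) bar-invariance of each $a_\nu(v)$, (ii) integrality, and (iii) non-negativity. Parts (i) and (ii) together already force $a_\nu(v) \in \mathbb{Z}[v+v^{-1}]$, since any bar-invariant element of $\mathbb{Z}[v,v^{-1}]$ is automatically a polynomial in $v+v^{-1}$.

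For bar-invariance, I would apply the bar involution to $f_i G(\mu) = \sum_\nu a_\nu(v) G(\nu)$. Using the two listed properties $\overline{f_i x} = f_i \overline{x}$ and $\overline{G(\mu)} = G(\mu)$, I obtain $f_i G(\mu) = \sum_\nu a_\nu(v^{-1}) G(\nu)$, and comparing coefficients in the basis $\{G(\nu)\}$ forces $a_\nu(v) = a_\nu(v^{-1})$. For integrality, the explicit formula $f_i(s(\lambda)) = \sum_\mu v^{N_i(\lambda,\mu)} s(\mu)$ stated just before the proposition shows that $f_i$ preserves the $\mathbb{Z}[v,v^{-1}]$-lattice $\bigoplus_\lambda \mathbb{Z}[v,v^{-1}] s(\lambda)$. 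Each $G(\mu)$ lies in this lattice since the $v$-decomposition numbers $d^{(e)}_{\lambda\mu}(v)$ are in $\mathbb{Z}[v]$, and the transition matrix between $\{s(\nu)\}$ and $\{G(\nu)\}$ is unitriangular in dominance order with entries in $\mathbb{Z}[v]$, hence invertible over $\mathbb{Z}[v,v^{-1}]$. So the $a_\nu(v)$ lie in $\mathbb{Z}[v,v^{-1}]$, and combined with bar-invariance we conclude $a_\nu(v) \in \mathbb{Z}[v+v^{-1}]$.

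The main obstacle is the non-negativity of the coefficients. The naive LLT algorithm does not make positivity manifest, because one must subtract off corrections to enforce bar-invariance, which could in principle produce negative terms. My strategy would follow Leclerc and Thibon: use the divided powers $f_i^{(k)} = f_i^k / [k]!$ together with an induction along the Kashiwara crystal. Let $p$ denote the number of plus signs in the reduced $i$-signature of $\mu$, and let $\tilde{f}_i \mu$ denote the partition obtained by adding the lowest addable good $i$-node. By induction on a suitable refinement of dominance, one establishes
$$f_i G(\mu) = [p+1]\, G(\tilde{f}_i \mu) + \sum_{\nu \prec \tilde{f}_i \mu} a_\nu(v)\, G(\nu),$$
where the leading coefficient $[p+1] = v^p + v^{p-2} + \cdots + v^{-p}$ plainly lies in $\mathbb{N}_0[v+v^{-1}]$ and the subleading coefficients inherit positivity from the inductive hypothesis applied to divided-power expansions (which themselves act by non-negative integer-coefficient Laurent polynomials on a bar-invariant basis). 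An alternative route is to appeal to a geometric/categorical realisation of $\mathcal{F}$ — for instance Varagnolo and Vasserot's identification of canonical basis elements with classes of simple perverse sheaves on cyclic quiver varieties — under which $f_i$ arises from an exact positive functor, and the non-negativity becomes automatic. The combinatorial route is elementary but demands care with the crystal ordering; the geometric route is conceptually cleaner but relies on substantial external machinery.
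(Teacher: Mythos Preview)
The paper does not prove this proposition at all; it simply quotes it from Chuang--Miyachi--Tan \cite[Proposition~2.4]{positive}. So there is no in-paper argument to compare against, and your proposal has to be weighed on its own merits and against what the cited source does.

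Your parts (i) and (ii) are correct and routine: bar-invariance of $a_\nu(v)$ follows exactly as you wrote, integrality follows from the unitriangularity of the transition matrix between $\{s(\lambda)\}$ and $\{G(\lambda)\}$, and together these force $a_\nu(v)\in\mathbb{Z}[v+v^{-1}]$.

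The gap is in your combinatorial route for (iii). The leading term $[p+1]\,G(\tilde f_i\mu)$ is indeed correct---this is the compatibility of the canonical basis with Kashiwara's crystal---but your sentence ``the subleading coefficients inherit positivity from the inductive hypothesis applied to divided-power expansions'' is not a proof. You have not specified an induction variable for which the claim for $f_i G(\mu)$ reduces to strictly smaller instances of the \emph{same} claim, and the divided powers $f_i^{(k)}$ expand in the canonical basis via exactly the unknown structure constants you are trying to control. No purely combinatorial argument of this kind is known; the positivity here is a genuinely deep fact. Your second route, via the Varagnolo--Vasserot identification of the canonical basis with simple perverse sheaves (equivalently, of $d^{(e)}_{\lambda\mu}(v)$ with parabolic affine type~$A$ Kazhdan--Lusztig polynomials), is valid and is essentially how \cite{positive} establishes the result. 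So your geometric alternative \emph{is} the cited paper's approach by proxy, while your combinatorial alternative, as written, does not close.
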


\subsection{Adjustment Matrices and James's Conjecture}
Denote $\mathcal{H}_{\mathbb{C},\zeta}(\mathfrak{S}_{n})$ by $\mathcal{H}_{n}^{0}$ and $\mathcal{H}_{\mathbb{F},q}(\mathfrak{S}_{n})$ by $\mathcal{H}_{n}$. The Specht modules corresponding to two partitions lie in the same block of $\mathcal{H}_{n}$ if and only if they lie in the same block of $\mathcal{H}_{n}^{0}$ by Nakayama's lemma. Therefore, given a block $B$ of $\mathcal{H}_{n}$, we may denote $B^{0}$ to be its corresponding block in $\mathcal{H}_{n}^{0}$.

\begin{theorem}~\cite[Theorem 6.35]{mathas book}
Let $D$ and $D^{0}$ be the decomposition matrices for the blocks $B$ and $B^{0}$ respectively. Then, there is a square matrix $A$ with non-negative integer entries such that $$D=D^{0}A.$$
\end{theorem}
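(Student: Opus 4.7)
The plan is to work with a modular system $(\mathcal{O},K,\mathbb{F})$, where $\mathcal{O}$ is a discrete valuation ring with field of fractions $K$ of characteristic zero and residue field $\mathbb{F}$, together with an element $q_{\mathcal{O}}\in\mathcal{O}$ specialising to $\zeta$ in $K$ and to $q$ in $\mathbb{F}$. This produces an $\mathcal{O}$-algebra $\mathcal{H}_{\mathcal{O},q_{\mathcal{O}}}(\mathfrak{S}_n)$ that interpolates between $\mathcal{H}_n^0$ and $\mathcal{H}_n$. The point of the construction is that Specht modules possess $\mathcal{O}$-forms $S^{\lambda}_{\mathcal{O}}$ with $S^{\lambda}_{\mathcal{O}}\otimes_{\mathcal{O}}K \cong S^{\lambda}_{\mathbb{C},\zeta}$ and $S^{\lambda}_{\mathcal{O}}\otimes_{\mathcal{O}}\mathbb{F}\cong S^{\lambda}_{\mathbb{F}}$, so that characters and composition multiplicities behave compatibly under reduction.

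Next I would invoke idempotent lifting: for each $e$-regular $\mu$ in $B$, the projective indecomposable $P^{\mu}_{\mathbb{F}}$ lifts to a projective $\mathcal{H}_{\mathcal{O},q_{\mathcal{O}}}(\mathfrak{S}_n)$-module $P^{\mu}_{\mathcal{O}}$ whose reduction modulo the maximal ideal recovers $P^{\mu}_{\mathbb{F}}$. Setting $P^{\mu}_{K}:=P^{\mu}_{\mathcal{O}}\otimes_{\mathcal{O}}K$ gives a projective $\mathcal{H}_n^0$-module. Because the $P^{\nu}_{\mathbb{C},\zeta}$ (for $e$-regular $\nu$ in $B^0$) are a complete set of indecomposable projectives for $\mathcal{H}_n^0$, Krull--Schmidt yields a decomposition
$$[\![P^{\mu}_{K}]\!]^{p}=\sum_{\nu}a_{\nu\mu}\,[\![P^{\nu}_{\mathbb{C},\zeta}]\!]^{p}$$
with $a_{\nu\mu}\in\mathbb{Z}_{\ge 0}$. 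I define $A$ to be the matrix $(a_{\nu\mu})$; that the row and column indexing sets agree on both sides (partitions of $n$ in $B$, and $e$-regular partitions of $n$ in $B$) is exactly the content of Nakayama's Conjecture quoted earlier, so the discussion stays inside the block.

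To get $D=D^{0}A$, I apply the homomorphism $\mathbf{e}$ of the previous subsection to the displayed equation. On the left, compatibility of Specht modules with reduction gives
$$\mathbf{e}_{\mathbb{F}}[\![P^{\mu}_{\mathbb{F}}]\!]^{p}=\sum_{\lambda}[S^{\lambda}_{\mathbb{F}}:D^{\mu}_{\mathbb{F}}]\,[\![S^{\lambda}]\!] = \sum_{\lambda}D_{\lambda\mu}[\![S^{\lambda}]\!],$$
and the class of $P^{\mu}_{\mathcal{O}}$ in $\mathcal{E}_n$ is computed by the same character, so this equals $\mathbf{e}_{K}[\![P^{\mu}_{K}]\!]^{p}$. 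On the right, $\mathbf{e}_{K}[\![P^{\nu}_{\mathbb{C},\zeta}]\!]^{p}=\sum_{\lambda}D^{0}_{\lambda\nu}[\![S^{\lambda}]\!]$. Equating and comparing the coefficient of $[\![S^{\lambda}]\!]$ gives $D_{\lambda\mu}=\sum_{\nu}D^{0}_{\lambda\nu}a_{\nu\mu}$, which is $D=D^{0}A$.

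I expect the main obstacle to be not the matrix manipulation, which is essentially formal, but rather the setup of the modular system: one must exhibit an $\mathcal{O}$-form of the Hecke algebra such that Specht modules, projective indecomposables, and block idempotents all lift cleanly and reduce to the intended objects over $K$ and $\mathbb{F}$. Once this infrastructure is in place, the non-negativity of the entries of $A$ drops out for free from Krull--Schmidt applied to $P^{\mu}_{K}$, and no further work is required.
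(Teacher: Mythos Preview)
The paper does not supply its own proof of this statement; it is quoted verbatim as \cite[Theorem 6.35]{mathas book} and used as a black box. Your proposal follows the standard argument that one finds in Mathas's book and elsewhere: set up a modular system $(K,\mathcal{O},\mathbb{F})$, lift the projective indecomposables $P^{\mu}_{\mathbb{F}}$ to $\mathcal{O}$, extend scalars to $K$, decompose via Krull--Schmidt to obtain non-negative integers $a_{\nu\mu}$, and then compare Specht filtrations (equivalently, apply $\mathbf{e}$) to read off $D=D^{0}A$. This is correct and is essentially the proof in the cited reference, so there is nothing to compare against within the present paper.

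One small point worth tightening: you should be explicit that such a modular system with a parameter $q_{\mathcal{O}}$ of the required multiplicative order exists (so that both specialisations have the same $e$), and that the block idempotents of $\mathcal{H}_n$ lift to $\mathcal{H}_{\mathcal{O},q_{\mathcal{O}}}(\mathfrak{S}_n)$ in a way compatible with the block $B^{0}$ over $K$. You flag this yourself as the ``main obstacle'', and indeed it is the only place where genuine input is needed; once granted, the rest is the formal bookkeeping you describe.
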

We call $A$ the \textit{adjustment matrix} for the block $B$ and as a shorthand, denote its $(\lambda,\mu)$-entry as $\textnormal{adj}_{\lambda\mu}$.
Since $D^{0}$ can be computed by the LLT algorithm, $D$ is often studied by considering its adjustment matrix.
\begin{conjecture}(James's Conjecture ~\cite[\S4]{glnq})
Let $B$ be a block of $\mathcal{H}_{n}$ of $e$-weight $w$. If $w<\textnormal{char}(\mathbb{F})$, then the adjustment matrix for the block $B$ is the identity matrix.
\end{conjecture}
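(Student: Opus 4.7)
The plan is to show that the adjustment matrix $A$ is the identity by analysing its entries one pair at a time. Since the $v$-decomposition matrix $D^{0}$ specialised at $v=1$ is unitriangular with respect to the dominance order (with ones on the diagonal) and both $D$ and $D^{0}$ have non-negative integer entries, the factorisation $D = D^{0}A$ forces $A$ itself to be unitriangular with non-negative integer entries. Hence the question reduces to showing $\mathrm{adj}_{\lambda\mu} = 0$ for every pair $\lambda \neq \mu$ of $e$-regular partitions in the block $B$.

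The strategy I would pursue is an induction on partitions in $B$ that transports the vanishing of off-diagonal entries between blocks via the modular branching rules of Theorem 2.3, together with Proposition 2.8. The point is that if $\mu$ has a controllable $i$-signature, then $D^{\mu}\uparrow$ or $D^{\mu}\downarrow$ is a single simple module (up to a multiplicity of $\kappa!$), and its characteristic-zero counterpart has the same shape; comparing the two inside the Grothendieck groups $\mathcal{G}^{p}_{n}(\mathbb{F})$ and $\mathcal{E}_{n}$ and using that $f_{i}$ acts on the canonical basis with coefficients in $\mathbb{N}_{0}[v+v^{-1}]$ lets one match characteristic-$p$ composition factors with characteristic-zero ones. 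The weight hypothesis $w < \mathrm{char}(\mathbb{F})$ enters by ensuring that the branching reductions eventually land in blocks of strictly smaller weight, where an induction hypothesis (or Richards's and Fayers's prior results for $w \leq 4$) closes the loop.

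Concretely, first I would establish base cases consisting of partitions whose abacus displays sit close to the $e$-core (so that the Specht modules involved have tractable filtrations) and check by direct computation of $v$-decomposition numbers via the LLT algorithm that $\mathrm{adj}_{\lambda\mu}$ vanishes. Next I would propagate these vanishings upward in the dominance order: for each $(\lambda,\mu)$, find an $i$ such that applying $i$-Ind or $i$-Res to $P^{\mu}_{\mathbb{F}}$ produces a projective whose characteristic-zero decomposition data is known, then read off the $(\lambda,\mu)$-entry from the resulting comparison. The positivity in Proposition 2.8 is what makes this matching work: any potential ``extra'' multiplicity in characteristic $p$ would have to materialise as a non-negative integer and could be detected by a single branching step.

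The hard part will be the partitions $\mu$ whose normal and conormal beads do not give a clean one-step reduction to a smaller-weight block; for these one must compare characteristic-$p$ and characteristic-zero projectives directly, and this is where the weight-versus-characteristic bound has to do real work (presumably by suppressing $p$-torsion in Ext-groups that would otherwise contribute spurious composition factors). Williamson's counter-example shows that any such argument must exploit features truly special to small-weight blocks rather than a naive torsion estimate, so I expect the most delicate moment in the proof to be the verification, for each recalcitrant $\mu$, that no hidden extra copy of $S^{\lambda}$ can appear in $P^{\mu}_{\mathbb{F}}$ beyond those forced by $D^{0}$.
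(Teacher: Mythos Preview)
The statement you are trying to prove is labelled in the paper as a \emph{conjecture}, not a theorem, and for good reason: as the paper itself recalls in the introduction (and as you yourself note at the end of your proposal), Williamson has produced a counter-example. James's Conjecture is \emph{false} in general. Consequently no proof of the full statement can exist, and any argument that purports to establish it must contain a genuine error somewhere.

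Your proposal does not isolate where the hypothesis $w<\textnormal{char}(\mathbb{F})$ is actually used in a way that would distinguish the true cases from the false ones. You write that the weight hypothesis ``enters by ensuring that the branching reductions eventually land in blocks of strictly smaller weight,'' but branching to a block of smaller weight is a purely combinatorial phenomenon having nothing to do with $\textnormal{char}(\mathbb{F})$; and the vague appeal to ``suppressing $p$-torsion in Ext-groups'' is exactly the kind of step that Williamson's example shows cannot be made rigorous in general. So the proposal has a fatal gap: it never identifies a mechanism that is valid when $w<p$ and fails when $w\ge p$, and indeed no such mechanism can work uniformly because the conjecture is false.

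For context, the paper does not attempt to prove Conjecture~2.7. What it proves is the much more limited Theorem~2.9: the adjustment matrix for the \emph{principal block} of $\mathcal{H}_{5e}$ is the identity when $\textnormal{char}(\mathbb{F})\ge5$ and $e\neq4$. That proof proceeds by an exhaustive case analysis specific to weight~5 and to the empty $e$-core, using lowerability (Proposition~2.20), the Mullineux map, row removal, explicit $v$-decomposition numbers, and a delicate canonical-basis computation for one stubborn pair. None of this generalises to arbitrary blocks satisfying $w<p$.
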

\begin{theorem} (~\cite[Theorem 2.5, Theorem 2.6]{wt 4}) 
Suppose $\textnormal{char}(\mathbb{F})\ge 5$, and that $B$ is a block of $\mathcal{H}_{n}$ of weight at most 4. Then, the adjustment matrix for $B$ is the identity matrix.
\end{theorem}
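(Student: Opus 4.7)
The plan is to induct on the weight $w$ of the block and exploit the fact that $A$ is lower unitriangular with non-negative integer entries when the $e$-regular partitions are ordered compatibly with dominance. The cases $w=0$ and $w=1$ are immediate: a weight~$0$ block contains a single partition and is simple, while for a weight~$1$ block the decomposition numbers do not depend on the field. The central reduction for $w\in\{2,3,4\}$ is Scopes's theorem: two blocks of the same weight whose abacus displays are related by swapping a pair of adjacent runners with bead-difference at least $w$ are Morita equivalent and so share an adjustment matrix. This leaves only finitely many \emph{Scopes-minimal} blocks to analyse for each fixed weight.

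Within a Scopes-minimal block $B$, I would prove $\textnormal{adj}_{\lambda\mu}=0$ for $\lambda\rhd\mu$ by induction on $\mu$ in reverse dominance order. For each pair $(\lambda,\mu)$ I look for a residue $i$ and an adjacent block $A$ such that, via Theorem~2.3, the simple $D^{\mu}$ restricts or induces to a known simple in $A$ while $D^{\lambda}$ either vanishes there or lands in a block whose adjustment matrix is already the identity by the induction on weight. Expanding $f_{i}(G(\mu))$ through the LLT algorithm and comparing with $i$-Ind applied to $\textbf{e}_{\mathbb{F}}[\![P^{\mu}_{\mathbb{F}}]\!]^{p}$ converts the inductively-known identities $A=I$ for smaller blocks into linear constraints on the entries of $A$ for $B$. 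The positivity in Proposition~2.9, combined with the non-negativity of adjustment entries, then typically forces the offending entry to vanish. Weights $2$ and $3$ succumb to this routine with only a handful of Scopes-minimal blocks to check.

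The main obstacle is weight~$4$. Here the Scopes-minimal list is substantially longer, and certain $e$-regular $\mu$ have no residue $i$ for which the branching analysis distinguishes $\lambda$ from $\mu$, so the purely inductive strategy stalls. At these sticking points one must import finer tools: the Jantzen--Schaper sum formula to bound $[S^{\lambda}_{\mathbb{F}}:D^{\mu}_{\mathbb{F}}]$ directly, a runner-removal argument that transfers the question to a Hecke algebra with fewer runners (where the image block has strictly smaller weight or is already Scopes-reduced), and occasionally explicit computation of $v$-decomposition polynomials for small blocks. The hypothesis $\textnormal{char}(\mathbb{F})\ge 5$ enters precisely at these pinch points, since the integer coefficients appearing in the Jantzen formula and in the positivity comparisons at $v=1$ are all at most $4$: requiring $\textnormal{char}(\mathbb{F})>4$ prevents any potentially nonzero adjustment entry from being absorbed into a multiple of the characteristic.
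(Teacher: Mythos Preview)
The paper does not prove this statement at all: Theorem~2.8 is quoted with citation from Fayers's weight-four paper and is used purely as a black box (see the remark after Corollary~2.19 and the proof of Proposition~2.20). There is therefore no proof in the present paper against which to compare your proposal.

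That said, your high-level outline is broadly faithful to how Richards and Fayers actually argue in the cited works: Scopes reduction to finitely many representative blocks, then a case analysis combining branching to smaller-weight blocks, Jantzen--Schaper bounds, and for weight~$4$ additional tools such as runner removal and $[w:\kappa]$-pair arguments. Two points of accuracy. First, your dominance direction is inverted: the potentially nonzero off-diagonal entries are $\textnormal{adj}_{\lambda\mu}$ with $\mu\rhd\lambda$ (cf.\ Corollary~2.14), so the entries you must kill are those with $\lambda\lhd\mu$, not $\lambda\rhd\mu$; the latter vanish already by unitriangularity. Second, your explanation of why $\textnormal{char}(\mathbb{F})\ge 5$ is needed is off the mark. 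Adjustment entries are non-negative integers, not elements of $\mathbb{F}$, so nothing is ``absorbed into a multiple of the characteristic.'' The hypothesis enters through the Jantzen--Schaper formula: in a weight-$w$ block every relevant $h_{\lambda\sigma}$ satisfies $h_{\lambda\sigma}\le w$, so if $p>w$ then $v_{p}(h_{\lambda\sigma})=0$ and hence $J_{\mathbb{F}}(\lambda,\mu)=J_{\mathbb{C}}(\lambda,\mu)$, which is what allows the characteristic-zero computations to transfer.
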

The conjecture has been proved for weights at most four. In this paper, we prove the conjecture and its extension by Fayers when $e\neq4$ for the principal block of $\mathcal{H}_{5e}$ which has \textit{e-weight} equal to 5.
\begin{theorem}
Suppose $e\neq4$ and $\textnormal{char}(\mathbb{F})$$\ge5$, then the adjustment matrix for the principal block of $\mathcal{H}_{5e}$ is the identity matrix.
\end{theorem}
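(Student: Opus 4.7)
The plan is to show that $\textnormal{adj}_{\lambda\mu} = 0$ for every pair of distinct $e$-regular partitions $\lambda, \mu$ in the principal block of $\mathcal{H}_{5e}$. The diagonal entries $\textnormal{adj}_{\mu\mu} = 1$ are automatic, and since the adjustment matrix is lower-unitriangular in the dominance order, I may restrict attention to pairs with $\mu \vartriangleright \lambda$.

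The main engine is to bootstrap from the weight-$4$ case (Theorem 2.5). Because every block of $e$-weight at most $4$ has identity adjustment matrix when $\textnormal{char}(\mathbb{F}) \geq 5$, for any $e$-regular partition $\nu$ in such a block the projective indecomposable $P^{\nu}_{\mathbb{F}}$ has the same Specht-filtration multiplicities as in characteristic zero, and these are computable via the LLT algorithm by Ariki's theorem. Given an $e$-regular $\mu$ in the principal block of $\mathcal{H}_{5e}$, I search for a residue $i$ and a positive integer $\kappa$ such that $\mu^{-}$ (obtained by moving the $\kappa$ highest normal beads on runner $i$ leftward) lies in a block of weight at most $4$. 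Then $i$-$\textnormal{Ind}(P^{\mu^{-}}_{\mathbb{F}})$ is a projective module whose indecomposable summands in $\mathcal{G}^{p}_{5e}(\mathbb{F})$ can be controlled from two sides: the branching rule (Theorem 2.2) gives its Specht filtration, while the action of $f_{i}$ on the canonical basis together with the positivity statement of Proposition 2.4 and Theorem 2.3 pin down which $P^{\nu}_{\mathbb{F}}$ can occur as summands. In favourable cases this isolates $P^{\mu}_{\mathbb{F}}$ with a known multiplicity, forcing $\textnormal{adj}_{\lambda\mu} = 0$ for every $\lambda$ in its Specht support. A dual strategy using restriction of $P^{\mu}_{\mathbb{F}}$ down to a weight-$4$ block provides complementary constraints.

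Concretely, the first step will be to enumerate the $e$-regular partitions in the principal block via abacus displays, and to partition the resulting pairs $(\lambda, \mu)$ into classes according to which inductive argument applies. For $e \geq 5$ the principal block is relatively sparse and most pairs fall into a single-normal-bead class where one application of $i$-$\textnormal{Ind}$ suffices; Scopes-type runner equivalences should reduce the remaining cases to a short check-list independent of $e$. The cases $e = 2$ and $e = 3$ are genuinely different because the principal block contains more $e$-regular partitions and fewer clean reductions are available, so they will require explicit abacus manipulations, direct use of the modular branching rules of Theorem 2.3, and in some cases the solution of small linear systems of non-negativity constraints coming from several applications of induction and restriction combined.

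The main obstacle will be the handful of pairs $(\lambda, \mu)$ for which no single application of induction or restriction from a weight-$4$ block cleanly isolates $P^{\mu}_{\mathbb{F}}$, because several principal-block projectives appear simultaneously as summands of $i$-$\textnormal{Ind}(P^{\mu^{-}}_{\mathbb{F}})$. For these the plan is to combine the outputs of several such constructions, varying both $i$ and the starting partition $\mu^{-}$, into a linear system over $\mathbb{Z}_{\geq 0}$ whose only non-negative integer solution forces the relevant $\textnormal{adj}_{\lambda\mu}$ to vanish. The exclusion of $e = 4$ is precisely the anticipated failure mode of this method: for exactly two such pairs in $\mathcal{H}_{20}$ the combined linear system retains a genuine non-negative free parameter after exhausting the weight-$4$ input, leaving the two entries flagged in the abstract unresolved by this approach.
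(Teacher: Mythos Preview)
Your plan and the paper's proof share the final technical device (realising $P^\mu$ as a summand of an induced projective via Proposition~2.5 and comparing Specht multiplicities), but the paper deploys this only once, for a single residual pair, whereas you propose it as the primary engine throughout. The paper's organisation is essentially the reverse of yours. Its first and decisive step is a \emph{restriction} argument on simple modules (Proposition~2.20 and Corollary~3.2): $\textnormal{adj}_{\lambda\mu}=0$ automatically whenever $D^\lambda$ restricts irreducibly or trivially to every block of $\mathcal{H}_{5e-1}$. This confines the problematic rows $\lambda$ to three explicit families $\langle i_{3,2}\rangle$, $\langle i_{2^2},j\rangle$, $\langle j,i_{2^2}\rangle$. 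The paper then prunes the possible columns $\mu$ using the row-removal theorem (Corollary~2.19), the product order (Lemma~2.15), and the Mullineux involution (Proposition~2.10), none of which you invoke. What survives is a short, $e$-independent table, almost every entry of which satisfies $d^{(e)}_{\lambda\mu}(v)\in\{0,v\}$ and is disposed of by the Jantzen--Schaper bound via Ryom-Hansen's theorem (Corollary~2.17). Only the single pair $(\langle 1,(e-1)_{2^2}\rangle,\langle 0,1,(e-1)_3\rangle)$ for $e\ge5$ requires the canonical-basis induction argument (Proposition~3.3).

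The genuine gap in your proposal is that, without the lowerable-pair reduction and the row-removal/Mullineux/product-order/Jantzen--Schaper toolkit, you have no mechanism for collapsing the problem to a finite, $e$-independent list of hard pairs. Your assertion that ``most pairs fall into a single-normal-bead class where one application of $i$-Ind suffices'' is unsubstantiated, Scopes equivalences play no role in the paper, and your fallback of solving non-negative linear systems assembled from several inductions is not shown to succeed for any specific pair. The paper's approach buys precisely this finiteness: a structured reduction that leaves one case for $e\ge5$ (and two for $e=4$), so that the canonical-basis computation is a brief coda rather than the whole proof.
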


\subsection{The Mullineux map}
Let $T_{1},\dots,T_{n-1}$ be the standard generators of $\mathcal{H}_{n}$ defined at the beginning of this section. Let $\sharp:\mathcal{H}_{n}\rightarrow\mathcal{H}_{n}$ be the involutory automorphism sending $T_{i}$ to $q-1-T_{i}$. Given a $\mathcal{H}_{n}-module$ $M$, define $M^{\sharp}$ 
to be the module with the same underlying vector space and with action
$$h\cdot m=h^{\sharp}m.$$
In the case of the symmetric groups when $q=1$, $M^{\sharp}$ is $M\otimes \textnormal{sgn}$, where sgn is the 1-dimensional signature representation.
Let $\lambda^{\diamond}$ be the $e$-regular partition such that $(D^{\lambda})^{\sharp}\cong D^{\lambda^{\diamond}}$. The map $\lambda \mapsto \lambda^{\diamond}$ is an involutory bijection from the set of $e$-regular partitions of $n$ to itself, and is given combinatorially by Mullineux's algorithm~\cite{Mullineux} which depends only on $\lambda$ and $e$, not $\mathbb{F}$ and $q$.
\begin{proposition}
(~\cite[Lemma 4.2]{wt 3}) If $\lambda$ and $\mu$ are e-regular partitions of n, then $\textnormal{adj}_{\lambda\mu}=\textnormal{adj}_{\lambda^{\diamond}\mu^{\diamond}}$.
\end{proposition}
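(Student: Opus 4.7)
The plan is to derive the proposition from a symmetry of decomposition numbers induced by the $\sharp$ automorphism. The target intermediate identity is
\[
[S^{\lambda} : D^{\mu}] = [S^{\lambda'} : D^{\mu^{\diamond}}],
\]
valid for any partition $\lambda$ of $n$, any $e$-regular $\mu$, and in both $\mathcal{H}_n$ and $\mathcal{H}_n^{0}$; this will be applicable in both settings because the Mullineux map $\diamond$ is a combinatorial recipe depending only on $e$, not on $\mathbb{F}$ or $q$. Since $\sharp$ is an algebra automorphism, $M \mapsto M^{\sharp}$ is an exact self-equivalence sending $D^{\mu}$ to $D^{\mu^{\diamond}}$ and hence the projective cover $P^{\mu}$ to $P^{\mu^{\diamond}}$. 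Applying $\sharp$ to a Specht filtration of $P^{\mu}$ produces a filtration of $P^{\mu^{\diamond}}$ whose factors are the $(S^{\lambda})^{\sharp}$ appearing with multiplicities $[S^{\lambda}:D^{\mu}]$. To close the loop I would invoke the standard contravariant duality on $\mathcal{H}_n$-modules (which fixes each $D^{\mu}$) to identify the composition multiplicities of $(S^{\lambda})^{\sharp}$ with those of $S^{\lambda'}$, giving the displayed identity.

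The remainder is linear algebra. Let $P_{\pi}$ denote the permutation matrix on partitions of $n$ induced by conjugation $\lambda \mapsto \lambda'$, and $P_{\sigma}$ the permutation matrix on $e$-regular partitions induced by $\mu \mapsto \mu^{\diamond}$. The symmetry now reads $P_{\pi} D = D P_{\sigma}$ and $P_{\pi} D^{0} = D^{0} P_{\sigma}$. Substituting $D = D^{0} A$ gives
\[
D^{0} P_{\sigma} A \;=\; P_{\pi} D^{0} A \;=\; P_{\pi} D \;=\; D P_{\sigma} \;=\; D^{0} A P_{\sigma}.
\]
The restriction of $D^{0}$ to its $e$-regular rows is unitriangular with respect to any total order extending the dominance order (each $S^{\mu}$ has simple cosocle $D^{\mu}$, and the other composition factors $D^{\nu}$ satisfy $\nu \vartriangleright \mu$), so $D^{0}$ has trivial left kernel as a linear map on columns. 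Cancelling therefore yields $P_{\sigma} A = A P_{\sigma}$, which unpacks to $\textnormal{adj}_{\nu^{\diamond}\mu} = \textnormal{adj}_{\nu\,\mu^{\diamond}}$ for every pair of $e$-regular partitions; substituting $\nu = \lambda^{\diamond}$ and using that $\diamond$ is an involution gives $\textnormal{adj}_{\lambda\mu} = \textnormal{adj}_{\lambda^{\diamond}\mu^{\diamond}}$.

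The step I expect to require the most care is the very first one, namely verifying that $(S^{\lambda})^{\sharp}$ and $S^{\lambda'}$ have identical composition multiplicities. For the group algebra $\mathbb{F}\mathfrak{S}_n$ this reduces to the familiar $S^{\lambda} \otimes \textnormal{sgn} \cong (S^{\lambda'})^{*}$ combined with the duality-invariance of composition factors, but for a general $\mathcal{H}_n$ one needs the analogous contravariant duality and a check that it fixes each $D^{\mu}$. Once that standard input is in place, the matrix manipulation and the cancellation of $D^{0}$ are entirely formal.
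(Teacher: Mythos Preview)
The paper does not supply its own proof of this proposition; it is quoted directly from Fayers's weight-three paper and treated as a known input. Your argument is the standard one and is correct: the symmetry $[S^{\lambda}:D^{\mu}]=[S^{\lambda'}:D^{\mu^{\diamond}}]$ holds over both $\mathbb{F}$ and $\mathbb{C}$ because the Mullineux map depends only on $e$, and the cancellation of $D^{0}$ via unitriangularity on the $e$-regular rows is exactly how one extracts the adjustment-matrix identity from it.

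One presentational remark. Your first paragraph routes the argument through Specht filtrations of $P^{\mu}$, but this detour is unnecessary and, if taken literally, asks for a slightly stronger fact than you need. From exactness of $\sharp$ and $(D^{\mu})^{\sharp}\cong D^{\mu^{\diamond}}$ you already have $[S^{\lambda}:D^{\mu}]=[(S^{\lambda})^{\sharp}:D^{\mu^{\diamond}}]$; then the contravariant duality (which fixes each simple) gives $[(S^{\lambda})^{\sharp}:D^{\mu^{\diamond}}]=[S^{\lambda'}:D^{\mu^{\diamond}}]$, since $(S^{\lambda})^{\sharp}$ is the dual of $S^{\lambda'}$. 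That is the whole proof of the displayed identity. By contrast, the projective-cover route would require you to match \emph{Specht-filtration} multiplicities of $(S^{\lambda})^{\sharp}$ and $S^{\lambda'}$, not merely composition multiplicities, and that is a deeper statement than the problem demands. Since you ultimately invoke only composition multiplicities, the argument is fine; just drop the filtration language.
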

\begin{remark}
This almost halves the number of entries of the adjustment matrix that we need to calculate in section 3.
\end{remark}

\subsection{The Jantzen-Schaper formula}
Let $\lambda$ be a partition and consider its abacus display, say with $k$ beads. Suppose that after moving a bead at position $a$ up its runner to a vacant position $a-ie$, we obtain the partition $\mu$. Denote $l_{\lambda\mu}$ for the number of occupied positions between $a$ and $a-ie$, and let $h_{\lambda\mu}=i$. \\
Further, write $\lambda \xrightarrow{\text{$\mu$}} \tau$ if the abacus display of $\tau$ with $k$ beads is obtained from that of $\mu$ by moving a bead at position $b-ie$ to a vacant position $b$, and $a<b$.
\begin{definition} \textit{Jantzen-Schaper bound}\\
Let $p=\textnormal{char}(\mathbb{F})$.
$$J_{\mathbb{F}}(\lambda,\mu)=\sum\limits_{\tau,\sigma}(-1)^{l_{\lambda \sigma}+l_{\tau \sigma} +1}(1+v_{p}(h_{\lambda \sigma}))[S^{\tau}_{\mathbb{F}}:D^{\mu}_{\mathbb{F}}],$$
where the sum runs through all $\tau$ and $\sigma$ such that $\lambda \xrightarrow{\text{$\sigma$}} \tau$, and where $v_{p}$ denotes the standard p-valuation if $p>0$ and $v_{0}(x)=0$  $\forall x$.
\end{definition}

\begin{theorem} Jantzen-Schaper formula(~\cite[Theorem 4.7]{Jantzen Schaper})
$$[S^{\lambda}_{\mathbb{F}}:D^{\mu}_{\mathbb{F}}] \le J_{\mathbb{F}}(\lambda,\mu).$$
Moreover, the left-hand side is zero if and only if the right-hand side is zero.
\end{theorem}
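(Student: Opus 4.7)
The goal is to show $\textnormal{adj}_{\lambda\mu}=0$ whenever $\lambda\ne\mu$ are $e$-regular partitions of $5e$ in the principal block $B$; the diagonal is forced to $1$ by unitriangularity of the adjustment matrix. The Mullineux symmetry (the preceding proposition) almost halves the work, so I would partition the $e$-regular partitions in $B$ into Mullineux pairs and treat one representative per pair.

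The main engine is a restriction argument built on Theorem 2.3 and the weight $\le 4$ case (Theorem 2.6). For each target $\mu$ I would look for a residue $i$ and a multiplicity $\kappa\ge 1$ such that $\mu$ has exactly $\kappa$ normal beads on runner $i$, and the block $A$ of $\mathcal{H}_{5e-\kappa}$ reached by sliding those beads one position left has weight at most $4$. Then $D^\mu\downarrow^B_A\cong (D^{\mu^-})^{\oplus\kappa!}$, and because Theorem 2.6 gives $\textnormal{adj}=I$ in $A$, every decomposition number $[S^\sigma_{\mathbb{F}}:D^{\mu^-}_{\mathbb{F}}]$ for $\sigma\in A$ equals $d^{(e)}_{\sigma\mu^-}(1)$. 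Using the Specht-branching rule to decompose $S^\lambda\downarrow_A$ and counting occurrences of $D^{\mu^-}$ on both sides, one recovers $[S^\lambda_{\mathbb{F}}:D^\mu_{\mathbb{F}}]=d^{(e)}_{\lambda\mu}(1)$, modulo contributions from other simples $D^{\mu'}$ in $B$ with $D^{\mu'}\downarrow^B_A\ne 0$. To absorb those contributions cleanly I would process partitions in a dominance-compatible order, so that adjustment entries for any $\mu'$ earlier in the order are already pinned down and can be substituted in. This disposes of all but a short exceptional list of $\mu$.

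The main obstacle is this short list of $\mu$ (together with their Mullineux duals) which admits no single-step restriction into a block of weight $\le 4$, namely those whose normal beads only lie on runners where the sliding keeps the weight at $5$. For each such $\mu$ I plan to combine three ingredients. First, iterating $i$-Ind and $i$-Res through intermediate blocks of weight $5$ lets me transport identities $[\![P^{\mu'}_{\mathbb{F}}]\!]^p=[\![P^{\mu',0}_{\mathbb{C}}]\!]^p$ from blocks where the adjustment matrix is already known, via the translation functors on the Grothendieck groups $\mathcal{G}^p_n(\mathbb{F})$ and $\mathcal{E}_n$ and the embedding $\mathbf{e}_{\mathbb{F}}$. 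Second, the Jantzen--Schaper bound (Theorem 2.10) cuts down the list of $\lambda$ for which $\textnormal{adj}_{\lambda\mu}$ could conceivably be non-zero to those with $J_{\mathbb{F}}(\lambda,\mu)>d^{(e)}_{\lambda\mu}(1)$. Third, the positivity statement $a_\nu(v)\in\mathbb{N}_0[v+v^{-1}]$ for the $f_i$-action on canonical basis elements lets me compute the relevant $G(\mu)$ explicitly via the LLT algorithm and pin down any residually ambiguous entries.

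The case split by $e$ is largely bookkeeping. For $e\in\{2,3\}$ the principal block of $\mathcal{H}_{5e}$ contains only moderately many $e$-regular partitions, and after the branching reductions the handful of remaining ones can be handled directly. For $e\ge 5$ the abacus has enough runners that the first-step reduction succeeds for all but a very short exceptional list, which is then cleared by the tools above. The case $e=4$ must be excluded precisely because in this single value of $e$ the exceptional list resists the strategy at two entries; these will be recorded as the open entries in the companion theorem for $\mathcal{H}_{20}$.
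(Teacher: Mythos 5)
Your proposal does not address the statement you were asked to prove. The statement is the Jantzen--Schaper formula itself: the inequality $[S^{\lambda}_{\mathbb{F}}:D^{\mu}_{\mathbb{F}}] \le J_{\mathbb{F}}(\lambda,\mu)$ together with the assertion that the two sides vanish simultaneously. What you have written instead is a strategy outline for the paper's main result (that the adjustment matrix of the principal block of $\mathcal{H}_{5e}$ is the identity), i.e.\ for Theorem 2.9, not for this theorem. Nothing in your text engages with the quantity $J_{\mathbb{F}}(\lambda,\mu)$, with the sum over pairs $\tau,\sigma$ with $\lambda \xrightarrow{\sigma} \tau$, or with why an alternating sum of decomposition numbers weighted by $(1+v_p(h_{\lambda\sigma}))$ should dominate $[S^{\lambda}_{\mathbb{F}}:D^{\mu}_{\mathbb{F}}]$. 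Worse, your second paragraph explicitly invokes ``the Jantzen--Schaper bound'' as one of your three ingredients, so even read charitably the argument is circular with respect to the statement it is supposed to establish.

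For the record, the paper does not prove this theorem either: it is imported verbatim from James and Mathas, where it is deduced from a $q$-analogue of the Jantzen sum formula. A genuine proof would proceed by constructing the Jantzen filtration $S^{\lambda} = S^{\lambda}(0) \supseteq S^{\lambda}(1) \supseteq \cdots$ from the Gram matrix of the canonical bilinear form on the Specht module over a suitable discrete valuation ring, showing that $\sum_{i>0}[S^{\lambda}(i)]$ equals the alternating sum defining $J_{\mathbb{F}}(\lambda,\mu)$ in the Grothendieck group, and then observing that $S^{\lambda}(1) = \mathrm{rad}\,S^{\lambda}$ contains every composition factor $D^{\mu}$ with $\mu \neq \lambda$ at least once, which yields both the inequality and the simultaneous-vanishing claim. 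If you are permitted to cite the result, a citation is the appropriate ``proof'' here; if not, you need the filtration argument, not an adjustment-matrix computation.
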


\begin{corollary}
If $J_{\mathbb{F}}(\lambda,\mu) \le 1$, then 
$$[S^{\lambda}_{\mathbb{F}}:D^{\mu}_{\mathbb{F}}] = J_{\mathbb{F}}(\lambda,\mu).$$
\end{corollary}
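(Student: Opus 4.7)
The statement is an immediate consequence of the Jantzen--Schaper formula (the theorem stated just above it), so the plan is essentially to split into two cases on the value of $J_{\mathbb{F}}(\lambda,\mu)$ and invoke the two parts of that theorem, together with the fact that decomposition numbers are non-negative integers.

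First I would handle the case $J_{\mathbb{F}}(\lambda,\mu)=0$. By the ``moreover'' clause of the Jantzen--Schaper theorem, the composition multiplicity $[S^{\lambda}_{\mathbb{F}}:D^{\mu}_{\mathbb{F}}]$ is zero if and only if $J_{\mathbb{F}}(\lambda,\mu)$ is zero, so both sides equal $0$ and the claimed equality is immediate.

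Next I would handle the case $J_{\mathbb{F}}(\lambda,\mu)=1$. The main inequality gives $[S^{\lambda}_{\mathbb{F}}:D^{\mu}_{\mathbb{F}}]\le 1$. On the other hand, since $J_{\mathbb{F}}(\lambda,\mu)\ne 0$, the iff part of the theorem forces $[S^{\lambda}_{\mathbb{F}}:D^{\mu}_{\mathbb{F}}]\ne 0$. Because $[S^{\lambda}_{\mathbb{F}}:D^{\mu}_{\mathbb{F}}]$ is a non-negative integer (it counts composition factors), it must therefore equal $1$, matching $J_{\mathbb{F}}(\lambda,\mu)$.

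There is no real obstacle here: the work is entirely carried by the hypothesis $J_{\mathbb{F}}(\lambda,\mu)\le 1$ being such a tight bound that, combined with the vanishing criterion, it pins the composition multiplicity exactly. The only thing one must be careful about is invoking both directions of the Jantzen--Schaper theorem (the upper bound and the ``zero iff zero'' clause), together with the integrality of $[S^{\lambda}_{\mathbb{F}}:D^{\mu}_{\mathbb{F}}]$; no further combinatorics or representation theory is needed.
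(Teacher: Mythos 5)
Your proof is correct and is exactly the argument the paper has in mind (the paper in fact states this corollary without proof, treating it as immediate from the two clauses of the Jantzen--Schaper theorem in just the way you describe). The only cosmetic addition one might make is to note that $J_{\mathbb{F}}(\lambda,\mu)\ge 0$ follows from the inequality itself together with non-negativity of the decomposition number, so the hypothesis really does leave only the two cases $0$ and $1$ that you treat.
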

We write $\lambda \rightarrow \tau$ if there exists some $\mu$ such that $\lambda \xrightarrow{\text{$\mu$}} \tau$. Further, write $\lambda <_{J}\sigma$ if there exist partitions $\tau_{0},\tau_{1},\dots,\tau_{r}$ such that $\tau_{0}=\lambda$, $\tau_{r}=\sigma$ and $\tau_{i-1}\rightarrow \tau_{i}$ $\forall i \in\{1,2,\dots,r\}$. We call $\le_{J}$ the \textit{Jantzen order} and it is clear that this defines a partial order on the set of all partitions, and that only partitions in the same block are comparable under this partial order. Moreover, the usual dominance order extends the Jantzen order. Combined with the fact that $\mathcal{H}_{n}$ is a cellular algebra, we have the following theorem.
\begin{theorem} Suppose $\lambda$ and $\mu$ are partitions of n, with $\mu$ e-regular. Then,
\begin{itemize}
\item $[S^{\mu}:D^{\mu}]=1;$
\item $[S^{\lambda}:D^{\mu}]>0 \Rightarrow \mu \ge_{J} \lambda .$
\end{itemize}
\end{theorem}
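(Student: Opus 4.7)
My plan is to handle the two bullets separately. The first is a standard consequence of the cellular algebra structure on $\mathcal{H}_n$ that is explicitly invoked in the sentence immediately preceding the theorem. The second, which is triangularity in the \emph{Jantzen} order (finer than dominance), is where the Jantzen--Schaper formula does real work; I would obtain it by downward induction along the dominance order on partitions of $n$.

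For the first bullet, I would appeal to the Graham--Lehrer cellular theory, together with the fact recalled in the introduction that $D^\mu$ is the irreducible cosocle of the cell module $S^\mu$ whenever $\mu$ is $e$-regular. General cellular theory then immediately yields $[S^\mu:D^\mu] = 1$, together with the weaker dominance-order triangularity $[S^\lambda:D^\mu] > 0 \Rightarrow \mu \trianglerighteq \lambda$. This weaker statement supplies both the base case and a useful control for the induction below.

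For the second bullet, fix an $e$-regular $\mu$ and argue by induction on $\lambda$ downward in dominance, which is well-founded as a finite partial order. If $\lambda$ is dominance-maximal with $[S^\lambda:D^\mu]>0$, cellular triangularity forces $\mu \trianglerighteq \lambda$ and hence $\mu=\lambda$, making the conclusion trivial. For the inductive step, assume $\lambda\neq\mu$ and $[S^\lambda:D^\mu]>0$. By the final clause of the Jantzen--Schaper formula (stating that the left-hand side vanishes iff the right-hand side does), $J_\mathbb{F}(\lambda,\mu) > 0$, so some summand of the defining sum contributes: there exist $\tau,\sigma$ with $\lambda \xrightarrow{\sigma} \tau$ and $[S^\tau:D^\mu]>0$. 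By definition $\lambda <_J \tau$, and since the dominance order extends the Jantzen order, also $\lambda \triangleleft \tau$; the inductive hypothesis applies to $\tau$ and gives $\mu \geq_J \tau$. Transitivity then yields $\mu >_J \lambda$, completing the induction. The one subtle point is precisely this requirement that $\tau \triangleright \lambda$, which is furnished by the ``dominance extends Jantzen'' observation made just above the theorem statement, so no additional work is needed.
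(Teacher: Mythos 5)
Your proposal is correct: the paper gives no explicit proof of this theorem, merely asserting that it follows from cellularity and the Jantzen--Schaper discussion, and your argument is precisely the standard fleshing-out of that assertion --- cellular theory for $[S^\mu:D^\mu]=1$ and dominance triangularity, then downward induction in dominance using the ``vanishing iff'' clause of Theorem 2.12 to produce $\tau$ with $\lambda \rightarrow \tau$ and $[S^\tau:D^\mu]>0$. The details check out (in particular $\tau\neq\lambda$ since the defining condition $a<b$ rules out undoing the bead move, and the fact that dominance extends $\le_J$ guarantees the inductive hypothesis applies to $\tau$), so this is essentially the same route the paper relies on.
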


\begin{corollary}
Suppose $\lambda$ and $\mu$ are e-regular partitions lying in a block B of $\mathcal{H}_{n}$. Then,
\begin{itemize}
\item $\textnormal{adj}_{\mu \mu} = 1;$
\item $\textnormal{adj}_{\lambda \mu}>0 \Rightarrow \mu  \ge_{J} \lambda .$
\end{itemize}
\end{corollary}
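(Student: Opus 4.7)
The plan is to deduce both bullets directly from the preceding theorem via the matrix identity $D = D^{0}A$. I would prove the second bullet first and then bootstrap the first from it, since the statement and proof naturally invert.

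For the second bullet, I would expand $D = D^{0}A$ entrywise to obtain
\[
[S^{\lambda}_{\mathbb{F}}:D^{\mu}_{\mathbb{F}}] \;=\; \sum_{\nu} [S^{\lambda}_{\mathbb{C},\zeta}:D^{\nu}_{\mathbb{C},\zeta}]\,\textnormal{adj}_{\nu\mu},
\]
where $\nu$ ranges over the $e$-regular partitions in the block. Every summand is a non-negative integer. If $\textnormal{adj}_{\lambda\mu}>0$, then the $\nu=\lambda$ term alone contributes $[S^{\lambda}_{\mathbb{C},\zeta}:D^{\lambda}_{\mathbb{C},\zeta}]\cdot\textnormal{adj}_{\lambda\mu}\ge 1$, using the first bullet of the preceding theorem over $\mathbb{C}$. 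Hence $[S^{\lambda}_{\mathbb{F}}:D^{\mu}_{\mathbb{F}}]>0$, and the second bullet of that theorem now yields $\mu\ge_{J}\lambda$.

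For the first bullet, I would specialise $\lambda=\mu$ in the same identity to obtain
\[
1 \;=\; [S^{\mu}_{\mathbb{F}}:D^{\mu}_{\mathbb{F}}] \;=\; \textnormal{adj}_{\mu\mu} \;+\; \sum_{\nu\neq\mu}[S^{\mu}_{\mathbb{C},\zeta}:D^{\nu}_{\mathbb{C},\zeta}]\,\textnormal{adj}_{\nu\mu}.
\]
A summand with $\nu\neq\mu$ can be nonzero only if $[S^{\mu}_{\mathbb{C},\zeta}:D^{\nu}_{\mathbb{C},\zeta}]>0$, which by the preceding theorem over $\mathbb{C}$ forces $\nu\ge_{J}\mu$, and simultaneously $\textnormal{adj}_{\nu\mu}>0$, which by the second bullet just proved forces $\mu\ge_{J}\nu$. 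Since $\le_{J}$ is a partial order, these two inequalities force $\nu=\mu$, a contradiction. So the residual sum vanishes and $\textnormal{adj}_{\mu\mu}=1$.

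There is no real obstacle: the argument is pure non-negative-integer bookkeeping around the matrix equation $D=D^{0}A$ combined with the previous theorem. The only subtle point worth flagging is the order of argument; the diagonal identity $\textnormal{adj}_{\mu\mu}=1$ relies on the triangularity statement, so one must establish the bullets in the reverse order from how they are listed.
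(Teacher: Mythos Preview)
Your argument is correct. The paper does not actually write out a proof of this corollary; it is stated immediately after Theorem~2.13 and left to the reader, so there is nothing substantive to compare against. Your deduction from $D=D^{0}A$ together with the non-negativity of the entries and the two bullets of Theorem~2.13 is exactly the intended (and standard) justification, including the observation that one should establish the triangularity bullet first and then use it to kill the off-diagonal terms in the $\lambda=\mu$ identity.
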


It is difficult to check that $\mu\ngtr_{J}\lambda$ by inspection. To this end, we introduce the \textit{product order} on partitions. Let $\lambda$ be a partition, displayed on an abacus with $e$ runners and $N$ beads. Suppose that the beads having positive \textit{e-weights} are at positions $a_{1},a_{2},\dots,a_{r}$ with weights $w_{1},w_{2},\dots,w_{r}$ respectively. The \textit{induced e-sequence} of $\lambda$, denoted $s(\lambda)_{N}$, is defined as 
$$\bigsqcup\limits_{i=1}^{r}(a_{i},a_{i}-e,\dots,a_{i}-(w_{i}-1)e),$$ where $(b_{1},b_{2},\dots,b_{s})\sqcup(c_{1},c_{2},\dots,c_{t})$ denotes the weakly decreasing sequence obtained by rearranging terms in the sequence $(b_{1},\dots,b_{s},c_{1},\dots,c_{t})$. Note that $s(\lambda)_{N}\in\mathbb{N}_{0}^{w}$, where $w$ is the $e$-weight of $\lambda$. 

We define a partial order $\ge_{P}$ on the set of partitions by: $\mu\ge_{P}\lambda$ if and only if $\mu$ and $\lambda$ have the same $e$-core and $e$-weight, and $s(\mu)_{N}\ge s(\lambda)_{N}$ (for sufficiently large $N$) in the standard product order of $\mathbb{N}_{0}^{w}$.

\begin{lemma}(~\cite[Lemma 2.9]{beyond})
$$\lambda\le_{J}\mu\Rightarrow\lambda\le_{P}\mu.$$
\end{lemma}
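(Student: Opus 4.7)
The plan is to reduce to a single Jantzen move and verify the inequality by direct abacus bookkeeping. Since $\le_J$ is the transitive closure of the relation $\to$ and $\le_P$ is transitive, it suffices to prove that $\lambda\to\mu$ implies $\lambda\le_P\mu$. Unwinding the definition, if $\lambda\xrightarrow{\sigma}\mu$ with parameters $a<b$ and $i$, then the abacus for $\mu$ is obtained from that for $\lambda$ by deleting beads at positions $a$ and $b-ie$ and inserting beads at $a-ie$ and $b$. Since $a$ shares a runner with $a-ie$ and $b$ with $b-ie$, the per-runner bead counts are preserved, so $\lambda$ and $\mu$ have the same $e$-core; and the total position sum is preserved, so they have the same $e$-weight as well.

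Next I would analyse the induced $e$-sequence on a per-runner basis. Writing $s(\lambda)_N=\bigsqcup_r s_r(\lambda)$ for the contribution from runner $r$, one checks that the multiplicity of slot $s$ in $s_r(\lambda)$ equals $\min(s-f_r(s),\,m_r-f_r(s))$, where $f_r(s)$ is the number of beads on runner $r$ at slot indices strictly less than $s$, and $m_r$ is the total number of beads on runner $r$. Tracking how this multiplicity changes under the four insertions and deletions, I would split the argument into two cases. In Case B, $a=r+\alpha e$ and $b=r'+\beta e$ lie on different runners; here the multiset of lost positions is $L=\{r+(\alpha-i+1)e,\dots,r+\alpha e\}$ and the gained multiset is $G=\{r'+(\beta-i+1)e,\dots,r'+\beta e\}$. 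Case A ($r=r'$) behaves identically when the slot intervals $(\alpha-i,\alpha]$ and $(\beta-i,\beta]$ are disjoint; when they overlap, the overlap cancels and $L,G$ shrink to $\{r+(\alpha-i+1)e,\dots,r+(\beta-i)e\}$ and $\{r+(\alpha+1)e,\dots,r+\beta e\}$, each of cardinality $\beta-\alpha$.

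In every case, listing $L$ and $G$ in weakly decreasing order as $\ell_1>\dots>\ell_k$ and $g_1>\dots>g_k$, the term-by-term difference $g_j-\ell_j$ equals $(r'-r)+(\beta-\alpha)e$ in Case B or in non-overlapping Case A, and equals $ie$ in overlapping Case A. Both are strictly positive: the first because $a<b$ rearranges to exactly $(r'-r)+(\beta-\alpha)e>0$, and the second trivially. Finally I would apply the standard majorisation fact that if a multiset $M'$ is obtained from $M$ by replacing $L$ with $G$ where $g_j\ge\ell_j$ in sorted order, then $|\{x\in M':x\ge y\}|\ge|\{x\in M:x\ge y\}|$ for every threshold $y$ (each $\ell_j\ge y$ supplies a $g_j\ge y$), which is equivalent to the sorted $M'$ dominating the sorted $M$ termwise. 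Applied with $M=s(\lambda)_N$ and $M'=s(\mu)_N$ this gives $\lambda\le_P\mu$. The main source of technical friction will be the same-runner overlap bookkeeping, but once both formulas for $g_j-\ell_j$ are in hand the remainder of the argument is formal.
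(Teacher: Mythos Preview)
The paper does not supply a proof of this lemma; it is quoted verbatim from \cite[Lemma 2.9]{beyond} and used as a black box. So there is no in-paper argument to compare against.

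Your argument is correct and self-contained. The reduction to a single step $\lambda\to\tau$ is legitimate since $\le_P$ is transitive, and your identification of the net bead changes (remove at $a$ and $b-ie$, insert at $a-ie$ and $b$) is exactly what the definition of $\lambda\xrightarrow{\sigma}\tau$ gives. Your multiplicity formula $\min(t,m_r)-f_r(t)$ for slot $t$ on runner $r$ is right: the $j$-th bead (at slot $s_j$) contributes slots $\{j,j+1,\dots,s_j\}$, so slot $t$ appears once for each $j$ with $f_r(t)+1\le j\le\min(t,m_r)$. The case analysis then goes through: in Case~B and non-overlapping Case~A the termwise gap $g_j-\ell_j$ is precisely $b-a>0$, and in the overlapping same-runner case it is $ie>0$. (It is worth recording, as a sanity check, that overlapping Case~A forces $\beta-i\neq\alpha$ since after the first move slot $\alpha$ is vacant; this excludes the borderline $\beta=\alpha+i$ and ensures the two shrunken sets really have the same cardinality $\beta-\alpha$.) The final ``threshold-counting'' step is the standard equivalence between multiset domination and termwise domination of the sorted sequences, and applies because positions on different runners have different residues mod $e$, so combining the per-runner changes into the global $L$ and $G$ causes no collisions.

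One cosmetic point: you use $\mu$ for the target partition, matching the lemma statement, whereas the paper's definition of $\lambda\xrightarrow{\sigma}\tau$ reserves $\mu$ (your $\sigma$) for the intermediate partition. In a written-up version it would help the reader to flag this or switch to $\tau$.
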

Therefore, $\mu\ngtr_{P}\lambda \Rightarrow \textnormal{adj}_{\lambda\mu}=0$.

An important connection between $v$-decomposition numbers and the Jantzen-Schaper formula was shown by Ryom-Hansen in the following theorem:
\begin{theorem}
(~\cite[Theorem 1]{Hansen}) Suppose $\lambda$ and $\mu$ are partitions of n, with $\mu$ e-regular, and let $d_{\lambda \mu}^{(e)'}(v)$ denote the derivative of the v-decomposition number $d_{\lambda \mu}^{(e)}(v)$ with respect to v. Then
$$J_{\mathbb{C}}(\lambda,\mu)=d_{\lambda \mu}^{(e)'}(1).$$
\end{theorem}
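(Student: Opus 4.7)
The plan is to match the two sides of the identity using the recursive structure of the LLT algorithm and the combinatorics of the bar involution on the Fock space $\mathcal{F}$.

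First, I would rewrite the Jantzen–Schaper sum in a form amenable to direct comparison with a $v$-decomposition quantity. In characteristic zero every $p$-adic weight $1+v_{0}(h_{\lambda\sigma})$ collapses to $1$, and Ariki's theorem lets me substitute $[S^{\tau}_{\mathbb{C}}:D^{\mu}_{\mathbb{C}}]=d^{(e)}_{\tau\mu}(1)$, so
\begin{equation*}
J_{\mathbb{C}}(\lambda,\mu)=\sum_{\tau,\sigma:\,\lambda\xrightarrow{\sigma}\tau}(-1)^{l_{\lambda\sigma}+l_{\tau\sigma}+1}\,d^{(e)}_{\tau\mu}(1).
\end{equation*}
The task then reduces to a combinatorial identity: the alternating sum on the right must equal $d^{(e)'}_{\lambda\mu}(1)$.

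Second, I would compute $d^{(e)'}_{\lambda\mu}(1)$ using bar-invariance. Writing $G(\mu)=s(\mu)+\sum_{\lambda\neq\mu}d^{(e)}_{\lambda\mu}(v)\,s(\lambda)$ and denoting the matrix of the bar involution on the monomial basis by $R_{\lambda\nu}(v)$, so that $\overline{s(\lambda)}=\sum_{\nu}R_{\lambda\nu}(v)\,s(\nu)$, the condition $\overline{G(\mu)}=G(\mu)$ becomes the triangular system
\begin{equation*}
d^{(e)}_{\lambda\mu}(v)=\sum_{\nu}R_{\nu\lambda}(v^{-1})\,d^{(e)}_{\nu\mu}(v^{-1}).
\end{equation*}
Leclerc and Thibon give an explicit description of each $R_{\lambda\nu}(v)$ as a signed sum over pairs of compatible abacus hook-moves joining $\lambda$ and $\nu$, with the sign recorded by the parities $l_{\lambda\sigma}$ and $l_{\nu\sigma}$ for the intermediate partition $\sigma$ and the power of $v$ controlled by $h_{\lambda\sigma}$. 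Differentiating the triangular system at $v=1$, extracting first-order terms, and substituting the previously known values $d^{(e)}_{\nu\mu}(1)$ converts $d^{(e)'}_{\lambda\mu}(1)$ into exactly the alternating sum over hook-moves that appears in the rewritten form of $J_{\mathbb{C}}(\lambda,\mu)$.

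The main obstacle is the bookkeeping required to match signs and multiplicities: I would have to verify that the parity $l_{\lambda\sigma}+l_{\tau\sigma}+1$ in the Jantzen–Schaper definition coincides, pair by pair, with the sign produced by differentiating Leclerc–Thibon's expression for $R_{\nu\lambda}(v^{-1})$ at $v=1$, and that each hook-move pair is counted exactly once on each side. An alternative route is to invoke the $q$-analogue of the Jantzen conjecture identifying $d^{(e)}_{\lambda\mu}(v)=\sum_{i\ge 0}[J^{i}(S^{\lambda})/J^{i+1}(S^{\lambda}):D^{\mu}]\,v^{i}$; differentiating gives $d^{(e)'}_{\lambda\mu}(1)=\sum_{i\ge 1}[J^{i}(S^{\lambda}):D^{\mu}]$, which by Schaper's classical sum formula equals $J_{\mathbb{C}}(\lambda,\mu)$, but this pathway imports deep geometric input that the direct bar-involution computation avoids.
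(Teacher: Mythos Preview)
The paper does not supply its own proof of this statement; it is quoted verbatim from Ryom-Hansen's paper \cite{Hansen} and used as a black box. So there is no in-paper argument to compare against.

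That said, your sketch is close in spirit to Ryom-Hansen's actual proof. He too differentiates the bar-invariance relation $D(v)=A(v)D(v^{-1})$ at $v=1$ (where $A(v)$ is the matrix of the bar involution on the monomial basis and $D(v)$ the matrix of $v$-decomposition numbers), obtaining $2D'(1)=A'(1)D(1)$ since $A(1)=I$. The heart of his argument is then the explicit identification of $\tfrac{1}{2}A'_{\lambda\tau}(1)$ with the signed count $\sum_{\sigma:\,\lambda\xrightarrow{\sigma}\tau}(-1)^{l_{\lambda\sigma}+l_{\tau\sigma}+1}$, carried out via the $q$-wedge straightening rules that define the bar involution. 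One point where your sketch oversimplifies: the full matrix $R_{\lambda\nu}(v)$ is \emph{not} itself a simple signed sum over hook-move pairs---it is defined recursively through the semi-infinite wedge relations and can be quite complicated. What is tractable, and what Ryom-Hansen actually computes, is only its derivative at $v=1$, which is where the single-hook-move combinatorics emerges. Your ``main obstacle'' paragraph correctly locates the real work, and your alternative route through the graded Jantzen filtration is also valid but, as you note, imports much heavier machinery.
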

\begin{remark}
If we fix a particular value of $e$, it is often easier to get the $v$-decomposition numbers using the LLT algorithm as opposed to finding the Jantzen-Schaper bound directly.
\end{remark}

\begin{corollary}
Suppose $\lambda$ and $\mu$ are $e$-regular partitions lying in the principal block B of $\mathcal{H}_{5e}$, $e\ge5$ and $p=\textnormal{char}(\mathbb{F})\ge5$. Moreover, suppose that $\lambda$ is not of the form $\langle i_{5} \rangle$. Additionally, suppose that $\textnormal{adj}_{\nu\mu}=0$ for all e-regular partitions $\nu$ such that $\lambda <_{J} \nu <_{J} \mu$, and that $d_{\lambda \mu}^{(e)}(v) \in \{0,v\}$. Then, $\textnormal{adj}_{\lambda\mu}=0.$
\end{corollary}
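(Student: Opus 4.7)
The approach is to play the Jantzen-Schaper bound (Theorem~2.13) against itself over $\mathbb{F}$ and over $\mathbb{C}$, and then to translate one into the other using the factorisation $D=D^{0}A$ and Ryom-Hansen's identity $J_{\mathbb{C}}(\lambda,\nu)=d^{(e)'}_{\lambda\nu}(1)$. The crucial preliminary observation is that the hypothesis $\lambda\neq\langle i_{5}\rangle$ eliminates the only way in which $h_{\lambda\sigma}=5$ can arise in the Jantzen-Schaper sum for $J_{\mathbb{F}}(\lambda,\mu)$: since the block has $e$-weight $5$, moving a single bead up $5e$ positions removes the entire $e$-weight of $\lambda$ in one hook, which on the abacus forces that weight to be concentrated in a single bead and hence $\lambda(i)=(5)$ for some $i$. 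Once this case is excluded, every $h_{\lambda\sigma}$ in the sum lies in $\{1,2,3,4\}$, and because $p\ge 5$ we have $v_{p}(h_{\lambda\sigma})=0$ throughout, so
$$J_{\mathbb{F}}(\lambda,\mu)=\sum_{\tau,\sigma}(-1)^{l_{\lambda\sigma}+l_{\tau\sigma}+1}[S^{\tau}_{\mathbb{F}}:D^{\mu}_{\mathbb{F}}].$$

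Next I would expand $[S^{\tau}_{\mathbb{F}}:D^{\mu}_{\mathbb{F}}]=\sum_{\nu}d^{(e)}_{\tau\nu}(1)\,\textnormal{adj}_{\nu\mu}$ using $D=D^{0}A$, interchange the order of summation, and invoke Ryom-Hansen to obtain
$$J_{\mathbb{F}}(\lambda,\mu)=\sum_{\nu}\textnormal{adj}_{\nu\mu}\,J_{\mathbb{C}}(\lambda,\nu)=\sum_{\nu}\textnormal{adj}_{\nu\mu}\,d^{(e)'}_{\lambda\nu}(1),$$
with the analogous (simpler) expansion $[S^{\lambda}_{\mathbb{F}}:D^{\mu}_{\mathbb{F}}]=\sum_{\nu}d^{(e)}_{\lambda\nu}(1)\,\textnormal{adj}_{\nu\mu}$. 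Corollary~2.10 forces $\nu\le_{J}\mu$ in every nonzero summand, while Theorem~2.11 (together with the fact that $d^{(e)}_{\lambda\nu}(v)\in v\mathbb{Z}_{\ge 0}[v]$ for $\nu\neq\lambda$) forces $\nu\ge_{J}\lambda$ whenever $d^{(e)}_{\lambda\nu}(1)\neq 0$ or $d^{(e)'}_{\lambda\nu}(1)\neq 0$. The hypothesis that $\textnormal{adj}_{\nu\mu}=0$ for all strictly intermediate $\nu$ then collapses both sums down to $\nu\in\{\lambda,\mu\}$; using $d^{(e)}_{\lambda\lambda}(v)=1$ and $\textnormal{adj}_{\mu\mu}=1$ gives
$$[S^{\lambda}_{\mathbb{F}}:D^{\mu}_{\mathbb{F}}]=\textnormal{adj}_{\lambda\mu}+d^{(e)}_{\lambda\mu}(1),\qquad J_{\mathbb{F}}(\lambda,\mu)=d^{(e)'}_{\lambda\mu}(1).$$

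Inserting these into the Jantzen-Schaper inequality $[S^{\lambda}_{\mathbb{F}}:D^{\mu}_{\mathbb{F}}]\le J_{\mathbb{F}}(\lambda,\mu)$, the hypothesis $d^{(e)}_{\lambda\mu}(v)\in\{0,v\}$ gives $d^{(e)}_{\lambda\mu}(1)=d^{(e)'}_{\lambda\mu}(1)\in\{0,1\}$, so the inequality reduces to $\textnormal{adj}_{\lambda\mu}\le 0$; non-negativity of adjustment matrix entries then forces $\textnormal{adj}_{\lambda\mu}=0$. The main delicate point is the abacus argument at the very start: everything downstream is tidy bookkeeping, but the exclusion $\lambda\neq\langle i_{5}\rangle$ is pulling real weight, since it is precisely what makes the characteristic correction $1+v_{p}(h_{\lambda\sigma})$ trivial and thus allows the $\mathbb{F}$- and $\mathbb{C}$-Jantzen-Schaper sums to be compared term by term.
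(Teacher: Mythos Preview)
Your argument is correct and follows essentially the same route as the paper's proof: both hinge on the observation that $\lambda\neq\langle i_{5}\rangle$ forces every $h_{\lambda\sigma}\le 4$, so that with $p\ge 5$ the valuation terms vanish and the $\mathbb{F}$- and $\mathbb{C}$-Jantzen--Schaper sums coincide; both then feed the adjustment-matrix relation $D=D^{0}A$ and Ryom-Hansen's identity into this to isolate $\textnormal{adj}_{\lambda\mu}$. The only organisational difference is that the paper first shows $[S^{\tau}_{\mathbb{F}}:D^{\mu}_{\mathbb{F}}]=[S^{\tau}_{\mathbb{C}}:D^{\mu}_{\mathbb{C}}]$ term by term for each $\tau$ with $\lambda<_{J}\tau\le_{J}\mu$, deduces $J_{\mathbb{F}}(\lambda,\mu)=J_{\mathbb{C}}(\lambda,\mu)\le 1$, and then invokes Corollary~2.12 (the ``$J\le 1$ forces equality'' statement) to conclude $[S^{\lambda}_{\mathbb{F}}:D^{\mu}_{\mathbb{F}}]=[S^{\lambda}_{\mathbb{C}}:D^{\mu}_{\mathbb{C}}]$ exactly, whereas you expand $J_{\mathbb{F}}(\lambda,\mu)$ globally as $\sum_{\nu}\textnormal{adj}_{\nu\mu}\,d^{(e)'}_{\lambda\nu}(1)$ and use only the inequality half of the Jantzen--Schaper formula to obtain $\textnormal{adj}_{\lambda\mu}\le 0$. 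Both are valid and amount to the same computation read in two different orders.
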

\begin{proof}
Suppose $\lambda <_{J} \nu <_{J} \mu$. Then,
$$[S^{\nu}_{\mathbb{F}}:D^{\mu}_{\mathbb{F}}]=\sum\limits_{\nu\le_{J} \sigma \le_{J} \mu} \textnormal{adj}_{\sigma \mu}[S^{\nu}_{\mathbb{C}}:D^{\sigma}_{\mathbb{C}}]=[S^{\nu}_{\mathbb{C}}:D^{\mu}_{\mathbb{C}}],$$
where the first equality is due to the definition of adjustment matrices, Theorem 2.13 and Corollary 2.14, and the second equality is due to our assumptions in the statement.
Since $\lambda$ is not of the form $\langle i_{5} \rangle$, $v_{p}(h_{\lambda\sigma})=0$ for all $\sigma$ and $\tau$ such that $\lambda \xrightarrow{\text{$\sigma$}} \tau$. Hence, 
$$J_{\mathbb{F}}(\lambda,\mu)=J_{\mathbb{C}}(\lambda,\mu).$$
By the previous theorem, $J_{\mathbb{C}}(\lambda,\mu)=0$ or $1$ when $d_{\lambda \nu}^{(e)}(v)=0$ or $v$ respectively. Therefore, $J_{\mathbb{F}}(\lambda,\mu)=J_{\mathbb{C}}(\lambda,\mu)\le 1$ and we have 
$$[S^{\lambda}_{\mathbb{C}}:D^{\mu}_{\mathbb{C}}]=J_{\mathbb{C}}(\lambda,\mu)=J_{\mathbb{F}}(\lambda,\mu)=[S^{\lambda}_{\mathbb{F}}:D^{\mu}_{\mathbb{F}}].$$
By the definition of adjustment matrices, Theorem 2.13 and Corollary 2.14,
$$[S^{\lambda}_{\mathbb{F}}:D^{\mu}_{\mathbb{F}}]=\sum\limits_{\lambda\le_{J} \sigma \le_{J} \mu} \textnormal{adj}_{\sigma \mu}[S^{\lambda}_{\mathbb{C}}:D^{\sigma}_{\mathbb{C}}]=[S^{\lambda}_{\mathbb{C}}:D^{\mu}_{\mathbb{C}}]+\textnormal{adj}_{\lambda \mu}.$$
So, $\textnormal{adj}_{\lambda \mu}=0$ as required. \qed
\end{proof}

\subsection{The row removal theorem}
Given any partition $\nu=(\nu_{1},\nu_{2},\dots,\nu_{n})$ of $n$, we define $\nu^{2}:=(\nu_{2},\nu_{3},\dots,\nu_{n})$ to be the partition of $n-\nu_{1}$ obtained from $\nu$ by removing its first component. 
\begin{theorem}
(~\cite[Theorem 6.18]{glnq}) Suppose $\lambda$ and $\mu$ are partitions of $n$, with $\mu$ $e$-regular, and that $\lambda_{1}=\mu_{1}$.
Then,
$$[S^{\lambda}:D^{\mu}]=[S^{\lambda^{2}}:D^{\mu^{2}}].$$
\end{theorem}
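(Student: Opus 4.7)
The plan is to lift the identity to the $q$-Schur algebra $S_q(n,n)$, establish it there via a Levi truncation, and descend through the Schur functor. Under the Schur functor $f$, the Weyl module $\Delta(\nu)$ maps to $S^\nu$ and the simple $L(\nu)$ maps to $D^\nu$ when $\nu$ is $e$-regular, so decomposition numbers are preserved. It therefore suffices to prove $[\Delta(\lambda) : L(\mu)] = [\Delta(\lambda^2) : L(\mu^2)]$ at the level of $S_q(n,n)\textrm{-mod}$.

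Set $r = \lambda_1 = \mu_1$ and $\omega = (r, n-r)$, and let $e_\omega \in S_q(n,n)$ be the associated idempotent. Then $e_\omega S_q(n,n) e_\omega$ is Morita equivalent to the Levi $q$-Schur algebra $S_q(r,r) \otimes S_q(n-r, n-r)$, and the exact weight-truncation functor $F = e_\omega \cdot -$ sends Weyl modules to Weyl modules and simples either to simples or to zero. The key compatibilities to establish are: on standards, $F(\Delta(\nu)) \cong \Delta((\nu_1)) \boxtimes \Delta(\nu^2)$ whenever $\nu_1 \le r$; and on simples, $F(L(\nu)) \cong L((r)) \boxtimes L(\nu^2)$ precisely when $\nu_1 = r$, while $F(L(\nu))$ fails to contain $L((r)) \boxtimes L(\mu^2)$ as a Levi composition factor when $\nu_1 > r$ (since the Levi highest weight of $F(L(\nu))$ is then strictly larger than $\mu$). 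Applying $F$ to a composition series of $\Delta(\lambda)$ and counting multiplicities of $L((r)) \boxtimes L(\mu^2)$ on both sides yields $[\Delta(\lambda) : L(\mu)] = [\Delta((r)) \boxtimes \Delta(\lambda^2) : L((r)) \boxtimes L(\mu^2)]$. Because $\Delta((r)) = L((r))$ is simple, K\"unneth collapses the right-hand side to $[\Delta(\lambda^2) : L(\mu^2)]$, and the Schur functor then delivers $[S^{\lambda^2} : D^{\mu^2}]$.

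The main obstacle lies in pinning down the action of $F$ on simple modules, and in particular the multiplicity of $L((r)) \boxtimes L(\mu^2)$ in $F(L(\nu))$ for various $\nu$ with $\nu_1 \ge r$. This rests on the quasi-hereditary (Cline--Parshall--Scott) structure of $S_q(n,n)$ together with a careful comparison of highest weights under Levi restriction: the $\omega$-weight space of $L(\nu)$ is non-zero exactly when $\nu_1 \ge r$, and the identification of $F(L(\nu))$ with a specific Levi simple requires verifying that truncation preserves the head of a standard. A concrete alternative avoids the $q$-Schur machinery by using James' direct bilinear-form argument: $[S^\lambda : D^\mu]$ equals the rank of the Gram matrix of a canonical form on the permutation module $M^\lambda = x_\lambda \mathcal{H}_n$, and restriction of this form to the parabolic subalgebra $\mathcal{H}_r \otimes \mathcal{H}_{n-r}$ isolates the rank contribution coming from $M^{\lambda^2}$ exactly when $\lambda_1 = \mu_1$. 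Either route exploits the same parabolic compatibility.
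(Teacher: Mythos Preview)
The paper does not prove this statement; it is quoted without proof from James \cite[Theorem~6.18]{glnq}, so there is no in-paper argument to compare against.

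On the substance of your sketch: the identification ``$e_\omega S_q(n,n)e_\omega$ is Morita equivalent to $S_q(r,r)\otimes S_q(n-r,n-r)$'' is false for the single weight idempotent $e_\omega$, $\omega=(r,n-r)$. By construction $e_\omega S_q(n,n)e_\omega=\mathrm{End}_{\mathcal{H}_n}\bigl(M^{(r,n-r)}\bigr)$, a commutative algebra of dimension $\min(r,n-r)+1$ (one basis element per $(\mathfrak{S}_r\times\mathfrak{S}_{n-r})$-double coset in $\mathfrak{S}_n$), which cannot be Morita equivalent to the Levi $q$-Schur algebra. To realise the Levi inside $S_q(n,n)$ one must instead take $\xi=\sum_\alpha e_\alpha$ summed over all compositions $\alpha$ refining $(r,n-r)$, and even then relating $\xi S_q(n,n)\xi$ to $S_q(r,r)\otimes S_q(n-r,n-r)$ requires passing through the parabolic subalgebra $\mathcal{H}_r\otimes\mathcal{H}_{n-r}\subset\mathcal{H}_n$ rather than being immediate. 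With that corrected the outline is viable, though the step you yourself flag as the main obstacle---controlling the Levi composition factors of $F(L(\nu))$ when $\nu_1>r$---genuinely needs an argument: the $\omega$-weight space of such $L(\nu)$ is typically non-zero, so a bare highest-weight remark does not suffice, and one usually handles this via Donkin's saturated-set formalism. The bilinear-form alternative you mention at the end is closer in spirit to James's original argument in \cite{glnq} and sidesteps this bookkeeping.
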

Let $\trianglerighteq$ denote the standard \textit{dominance order} for partitions of $n$.
\begin{corollary}
Suppose that $\lambda$ and $\mu$ are $e$-regular partitions of $n$ with $\lambda_{1}=\mu_{1}$. Then, $$\textnormal{adj}_{\lambda \mu}=\textnormal{adj}_{\lambda^{2} \mu^{2}}.$$
\end{corollary}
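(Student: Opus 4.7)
The approach is downward induction on $\lambda$ in the dominance order, with $\mu$ fixed and $\lambda$ ranging over $e$-regular partitions with $\lambda_{1}=\mu_{1}$. The base case $\lambda=\mu$ is immediate from Corollary 2.16, since both sides equal $1$ (note that $\lambda^{2}$ is automatically $e$-regular, as removing the first row merely lowers the multiplicity of $\lambda_{1}$).

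For the inductive step I expand the decomposition number via the defining relation of the adjustment matrix,
$$[S^{\lambda}_{\mathbb{F}}:D^{\mu}_{\mathbb{F}}] \;=\; \sum_{\sigma}\,[S^{\lambda}_{\mathbb{C}}:D^{\sigma}_{\mathbb{C}}]\,\textnormal{adj}_{\sigma\mu},$$
with $\sigma$ running over $e$-regular partitions in the block of $\mu$. The factor $[S^{\lambda}_{\mathbb{C}}:D^{\sigma}_{\mathbb{C}}]$ vanishes unless $\sigma\trianglerighteq\lambda$, forcing $\sigma_{1}\geq\lambda_{1}=\mu_{1}$, and Corollary 2.16 gives $\textnormal{adj}_{\sigma\mu}=0$ unless $\sigma\leq_{J}\mu$, whence $\sigma\trianglelefteq\mu$ and $\sigma_{1}\leq\mu_{1}$. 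So only $\sigma$ with $\sigma_{1}=\mu_{1}$ contribute, and for these the row removal theorem over $\mathbb{C}$ (Theorem 2.19) gives $[S^{\lambda}_{\mathbb{C}}:D^{\sigma}_{\mathbb{C}}]=[S^{\lambda^{2}}_{\mathbb{C}}:D^{\sigma^{2}}_{\mathbb{C}}]$. Applying Theorem 2.19 over $\mathbb{F}$ to the left-hand side and expanding via the adjustment matrix for $\mathcal{H}_{n-\mu_{1}}$ yields
$$[S^{\lambda^{2}}_{\mathbb{F}}:D^{\mu^{2}}_{\mathbb{F}}] \;=\; \sum_{\tau}\,[S^{\lambda^{2}}_{\mathbb{C}}:D^{\tau}_{\mathbb{C}}]\,\textnormal{adj}_{\tau\mu^{2}}$$
over $e$-regular $\tau$ of $n-\mu_{1}$. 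Equating the two expressions and extracting the diagonal terms $\sigma=\lambda$ (contributing $\textnormal{adj}_{\lambda\mu}$) and $\tau=\lambda^{2}$ (contributing $\textnormal{adj}_{\lambda^{2}\mu^{2}}$, since $[S^{\lambda^{2}}_{\mathbb{C}}:D^{\lambda^{2}}_{\mathbb{C}}]=1$ by Theorem 2.15) reduces the desired identity to the claim that the remaining sums agree. I match them via the substitution $\tau=\sigma^{2}$: by the inductive hypothesis $\textnormal{adj}_{\sigma\mu}=\textnormal{adj}_{\sigma^{2}\mu^{2}}$ for every $e$-regular $\sigma\triangleright\lambda$ with $\sigma_{1}=\mu_{1}$, and the map $\sigma\mapsto\sigma^{2}$ is clearly injective on this set.

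The step needing the most care is checking that no $\tau$ in the right-hand sum is missed by this bijection, which is the main obstacle. A partition $\tau$ of $n-\mu_{1}$ fails to be of the form $\sigma^{2}$ for $e$-regular $\sigma$ precisely when $\mu_{1}$ occurs at least $e-1$ times in $\tau$, since then prepending $\mu_{1}$ destroys $e$-regularity. However, if $\textnormal{adj}_{\tau\mu^{2}}\neq 0$ then $\tau\trianglelefteq\mu^{2}$; writing $j$ for the multiplicity of $\mu_{1}$ in $\tau$, the partial-sum inequality $j\mu_{1}=\tau_{1}+\cdots+\tau_{j}\leq\mu_{2}+\cdots+\mu_{j+1}$ combined with each $\mu_{i}\leq\mu_{1}$ forces $\mu_{2}=\cdots=\mu_{j+1}=\mu_{1}$. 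Then $\mu$ has at least $j+1\geq e$ equal parts of size $\mu_{1}$, contradicting the $e$-regularity of $\mu$. Hence every term missed by the substitution already carries $\textnormal{adj}_{\tau\mu^{2}}=0$, the two sums agree term by term, and the inductive step closes, giving $\textnormal{adj}_{\lambda\mu}=\textnormal{adj}_{\lambda^{2}\mu^{2}}$.
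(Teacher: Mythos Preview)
Your proof is correct and follows essentially the same strategy as the paper's: downward induction on $\lambda$ in the dominance order (the paper phrases this as choosing a maximal counterexample), expanding both decomposition numbers via the adjustment matrix, applying row removal to the $\mathbb{C}$-decomposition numbers, and matching terms through the map $\sigma\mapsto\sigma^{2}$. You are in fact more explicit than the paper in verifying that every contributing $\tau$ on the $\mu^{2}$ side lifts to an $e$-regular $\sigma$ (the paper simply asserts the bijection); note, however, that your internal references are shifted---the unitriangularity of the adjustment matrix is Corollary~2.14, the row removal theorem is Theorem~2.18, and $[S^{\nu}:D^{\nu}]=1$ is Theorem~2.13.
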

\begin{proof}
Fix an $e$-regular partition $\mu$ of $n$. Suppose for a contradiction that the set of $e$-regular partitions of $n$, $X_{\mu}=\{\nu:\nu_{1}=\mu_{1} ,\nu\trianglelefteq\mu, \nu\neq\mu,\textnormal{adj}_{\nu \mu}\neq\textnormal{adj}_{\nu^{2} \mu^{2}}\}$ is not empty. Let $\tau$ be a maximal element in the dominance order of $X_{\mu}$.
 \begin{align*}
[S^{\tau}_{\mathbb{F}}:D^{\mu}_{\mathbb{F}}]&=[S^{\tau}_{\mathbb{C}}:D^{\mu}_{\mathbb{C}}]+\textnormal{adj}_{\tau \mu}+\sum\limits_{\tau \triangleleft \sigma \triangleleft \mu} \textnormal{adj}_{\sigma \mu}[S^{\tau}_{\mathbb{C}}:D^{\sigma}_{\mathbb{C}}]\\
\end{align*}
On the other hand,
\begin{align*}
[S^{\tau^{2}}_{\mathbb{F}}:D^{\mu^{2}}_{\mathbb{F}}]&=[S^{\tau^{2}}_{\mathbb{C}}:D^{\mu^{2}}_{\mathbb{C}}]+\textnormal{adj}_{\tau^{2} \mu^{2}}+\sum\limits_{\tau^{2} \triangleleft \gamma \triangleleft \mu^{2}} \textnormal{adj}_{\gamma \mu^{2}}[S^{\tau^{2}}_{\mathbb{C}}:D^{\gamma}_{\mathbb{C}}]\\
\end{align*}
The function $\sigma\rightarrow\sigma^{2}$ is a bijection from the set $\{\sigma:\tau \triangleleft \sigma \triangleleft \mu\}$ to $\{\gamma:\tau^{2} \triangleleft \gamma \triangleleft \mu^{2}\}$. Moreover, $\tau \triangleleft \sigma\triangleleft \mu$ implies that $\textnormal{adj}_{\sigma \mu}=\textnormal{adj}_{\sigma^{2} \mu^{2}}$ due to the maximality of $\tau$ in $X_{\mu}$. Combined with Theorem 2.18, we get $$\textnormal{adj}_{\tau \mu}=\textnormal{adj}_{\tau^{2} \mu^{2}},$$ which is a contradiction. \qed
\end{proof}
\begin{remark}
In section 3, we only consider the principal block, $B$ of $\mathcal{H}_{5e}$. If $\lambda$ and $\mu$ are partitions in $B$, then $\lambda^{2}$ and $\mu^{2}$ must be in a block of weight at most 4 and therefore are under the purview of Theorem 2.8. If moreover, $\lambda$ and $\mu$ are $e$-regular and $\lambda_{1}=\mu_{1}$, we may apply Corollary 2.19 to conclude that $\textnormal{adj}_{\lambda \mu}=\delta_{\lambda\mu}$, the Kronecker delta.
\end{remark}

\subsection{Lowerable partitions}
The following proposition allows us to make use of the work done by Richards and Fayers for blocks of weight less than 5 (Theorem 2.8) by inducing and restricting simple modules from blocks of weight 5 to blocks of weight less than 5.
\begin{proposition}
Suppose that $\textnormal{char}(\mathbb{F})\ge5$, B is a block of  $\mathcal{H}_{n}$ of weight 5, and C is a block of $\mathcal{H}_{n-1}$ of weight less than 5. Let $\lambda$ and $\mu$ be distinct e-regular partitions lying in B such that $D^{\mu}\downarrow_{C} \not=0$, while $D^{\lambda}\downarrow_{C}$ is either zero or simple. Then $\textnormal{adj}_{\lambda \mu}=0$.
\end{proposition}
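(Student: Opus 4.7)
The plan is to transfer information from the block $C$, where Theorem 2.8 guarantees that the adjustment matrix is the identity, up to $B$ by inducing projectives. For any $e$-regular partition $\pi$ in $C$, the multiplicity of $P^{\nu}$ as a summand of $P^{\pi}\uparrow^{B}$ equals $[D^{\nu}\downarrow_{C}:D^{\pi}]$ by Frobenius reciprocity, whether we work over $\mathbb{F}$ or over $\mathbb{C}$. Since $\mathbf{e}_{\mathbb{F}}([\![P^{\pi}_{\mathbb{F}}]\!]^{p})=\mathbf{e}_{\mathbb{C}}([\![P^{\pi}_{\mathbb{C}}]\!]^{p})$ in $\mathcal{E}_{n-1}$ and $i$-Ind commutes with $\mathbf{e}$, applying $i$-Ind and expanding each side in the basis $\{\mathbf{e}_{\mathbb{C}}([\![P^{\sigma}_{\mathbb{C}}]\!]^{p})\}_{\sigma}$ via the defining equation of the adjustment matrix yields the master identity
$$[D^{\sigma}_{\mathbb{C}}\downarrow_{C}:D^{\pi}_{\mathbb{C}}] \;=\; \sum_{\nu}\textnormal{adj}_{\sigma\nu}\,[D^{\nu}_{\mathbb{F}}\downarrow_{C}:D^{\pi}_{\mathbb{F}}],$$
valid for all $e$-regular $\sigma\in B$ and $\pi\in C$, in which every term on the right-hand side is non-negative.

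Setting $\sigma=\lambda$, I make a case distinction. If $D^{\lambda}\downarrow_{C}=0$, the left-hand side vanishes for every $\pi$, so each summand on the right must vanish; I pick any $\pi$ such that $[D^{\mu}_{\mathbb{F}}\downarrow_{C}:D^{\pi}_{\mathbb{F}}]>0$ (which exists since $D^{\mu}\downarrow_{C}\neq 0$), and the $\nu=\mu$ summand then forces $\textnormal{adj}_{\lambda\mu}=0$. If instead $D^{\lambda}\downarrow_{C}=D^{\lambda^{-}}$ is simple, the left-hand side equals $\delta_{\pi,\lambda^{-}}$; I take $\pi=\mu^{-}$, and injectivity of the crystal operator $\tilde{e}_{i}$ together with $\lambda\neq\mu$ gives $\lambda^{-}\neq\mu^{-}$, so the left-hand side is again zero. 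The modular branching rule places $D^{\mu^{-}}$ in the cosocle of $D^{\mu}\downarrow_{C}$, giving $[D^{\mu}_{\mathbb{F}}\downarrow_{C}:D^{\mu^{-}}_{\mathbb{F}}]\geq 1$, and the $\nu=\mu$ summand again forces $\textnormal{adj}_{\lambda\mu}=0$.

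The main subtlety is verifying that $\mathbf{e}$ intertwines induction of projectives with $i$-Ind on the Specht side, so that $\mathbf{e}_{\mathbb{F}}([\![P^{\pi}_{\mathbb{F}}\uparrow^{B}]\!]^{p})=\mathbf{e}_{\mathbb{C}}([\![P^{\pi}_{\mathbb{C}}\uparrow^{B}]\!]^{p})$ can be expanded cleanly into the master identity; once this bookkeeping is in place and the modular branching rule is invoked, the non-negativity of every term makes the vanishing of $\textnormal{adj}_{\lambda\mu}$ immediate in both cases.
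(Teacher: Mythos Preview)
Your argument is correct and arrives at exactly the same identity that the paper uses, namely $T^{0}=AT$ written entry-wise as
\[
[D^{\sigma}_{\mathbb{C}}\downarrow_{C}:D^{\pi}_{\mathbb{C}}]=\sum_{\nu}\textnormal{adj}_{\sigma\nu}\,[D^{\nu}_{\mathbb{F}}\downarrow_{C}:D^{\pi}_{\mathbb{F}}],
\]
after which the choice $\sigma=\lambda$, $\pi=\hat\mu$ (your $\mu^{-}$) and the non-negativity argument are identical to the paper's. The only difference is in how the identity is obtained: the paper argues directly on the restriction side, observing that exactness of restriction forces $T^{0}Z=AT$ for the simple branching matrices and then sets $Z=I$ by Theorem~2.8; you instead induce projectives from $C$ to $B$, invoke Frobenius reciprocity for the multiplicities, and use commutation of $i$-Ind with $\mathbf{e}$. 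These are Frobenius-dual routes to the same matrix equation, so the content is the same; the paper's version is slightly leaner because it avoids the $\mathbf{e}$/$i$-Ind bookkeeping you flag as the ``main subtlety''.
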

\begin{proof}
This is essentially the same as ~\cite[Proposition 2.17]{wt 4}. Let $B^{0}$ and $C^{0}$ be the blocks of $\mathcal{H}^{0}_{n}$ and $\mathcal{H}^{0}_{n-1}$ respectively corresponding to $B$ and $C$.

The modular branching rules which are characteristic-free imply that there is an $e$-regular partition $\hat{\mu}$ in $C$ such that $D^{\mu}_{\mathbb{F}}\downarrow_{C}$ is an indecomposable module with simple socle $D^{\hat{\mu}}_{\mathbb{F}}$, while $D^{\mu}_{\mathbb{C}}\downarrow_{C^{0}}$ is an indecomposable module with simple socle $D^{\hat{\mu}}_{\mathbb{C}}$. Moreover, we have $[D^{\lambda}_{\mathbb{C}}\downarrow_{C^{0}}:D^{\hat{\mu}}_{\mathbb{C}}]=0$; because $D^{\lambda}_{\mathbb{C}}\downarrow_{C^{0}}$ is either simple or zero, and if the former occurs, the modular branching rules show that it will be different from $D^{\hat{\mu}}_{\mathbb{C}}$.
  
Let $T$ be the `simple branching matrix' from $B$ to $C$, with rows indexed by $e$-regular partitions in B and columns by $e$-regular partitions in $C$, and with the $(\nu,\sigma)$-entry being the composition multiplicity $[D^{\nu}_{\mathbb{F}}\downarrow_{C}:D^{\sigma}_{\mathbb{F}}]$. Let $T^{0}$ be the simple branching matrix from $B^{0}$ to $C^{0}$ defined analogously. Using the fact that restriction is an exact functor, we have $T^{0}Z=AT$, where $Z$ and $A$ are the adjustment matrices for $C$ and $B$ respectively. $Z$ is the identity matrix by Theorem 2.8, therefore
$$T^{0}=AT.$$
Comparing the $(\lambda,\hat{\mu})$-entries of both sides yields
\begin{align*}
0&=[D^{\lambda}_{\mathbb{C}}\downarrow_{C^{0}}:D^{\hat{\mu}}_{\mathbb{C}}]\\
&=\sum\limits_{\nu}\textnormal{adj}_{\lambda \nu}[D^{\nu}_{\mathbb{F}}\downarrow_{C}:D^{\hat{\mu}}_{\mathbb{F}}]\\
&=\textnormal{adj}_{\lambda \mu}[D^{\mu}_{\mathbb{F}}\downarrow_{C}:D^{\hat{\mu}}_{\mathbb{F}}]+\sum\limits_{\nu \not= \mu}\textnormal{adj}_{\lambda \nu}[D^{\nu}_{\mathbb{F}}\downarrow_{C}:D^{\hat{\mu}}_{\mathbb{F}}].
\end{align*}
Since every term of the sum is non-negative and $[D^{\mu}_{\mathbb{F}}\downarrow_{C}:D^{\hat{\mu}}_{\mathbb{F}}]>0$, we conclude that $\textnormal{adj}_{\lambda \mu}=0$. \qed
\end{proof}
\begin{definition}
If $\lambda$ and $\mu$ satisfy the conditions of the proposition above, we say that $(\lambda, \mu)$ is $lowerable$.
\end{definition}

\section{The principal block of $\mathcal{H}_{5e}$}
Let $B$ be the principal block of $\mathcal{H}_{5e}$.
\begin{lemma}
If $\mu$ is an $e$-regular partition in B, then there is some block C of $\mathcal{H}_{5e-1}$ of weight less than 5 such that $D^{\mu}\downarrow_{C} \not=0$.
\end{lemma}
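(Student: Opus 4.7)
The plan is to reduce the lemma to two straightforward observations and combine them. First, note that every block $C$ of $\mathcal{H}_{5e-1}$ automatically has $e$-weight strictly less than $5$: a block of weight $w$ with $e$-core of size $c$ satisfies $c + we = 5e-1$, so $c \equiv -1 \pmod e$ with $c \ge 0$. The smallest allowable value $c = e-1$ yields $w=4$, and larger values of $c$ give smaller $w$. Hence the weight condition in the statement is automatic for \emph{every} block of $\mathcal{H}_{5e-1}$, and the problem collapses to producing any block $C$ at all for which $D^{\mu}\downarrow_{C}\neq 0$.

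For this remaining task, I would observe that restriction of modules does not alter the underlying vector space, so $D^{\mu}\downarrow_{\mathcal{H}_{5e-1}}$ has the same dimension as $D^{\mu}$, which is strictly positive because $\mu$ is $e$-regular. Decomposing the restriction into its block components $\bigoplus_{C} D^{\mu}\downarrow_{C}$ over the blocks $C$ of $\mathcal{H}_{5e-1}$, at least one summand must be nonzero; any such $C$ satisfies both required properties and finishes the argument.

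There is essentially no substantive obstacle here. If a more constructive proof is preferred---for instance, if one wishes to name an explicit block $C$ rather than invoke the vector-space-dimension argument---I would instead apply Theorem 2.3 and exhibit a residue $i$ such that the abacus display for $\mu$ has at least one normal bead on runner $i$; the corresponding block $A_{i}$ then serves as $C$. The existence of such a residue for any nonempty $e$-regular $\mu$ is standard from Kleshchev's modular branching theory, and can be verified directly on the abacus by fixing a removable bead of $\mu$, locating its runner $i$, and tracking the $i$-signature from the top of the abacus down to the row containing that bead to confirm that the $-$ it contributes survives all $-+$ cancellations.
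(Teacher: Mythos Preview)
Your proof is correct and follows essentially the same approach as the paper: the paper observes that $\mathcal{H}_{5e-1}$ is a unital subalgebra of $\mathcal{H}_{5e}$ so $D^{\mu}\downarrow_{\mathcal{H}_{5e-1}}\neq 0$, hence some block component is nonzero, and then notes that every block of $\mathcal{H}_{5e-1}$ has weight less than $5$. Your version simply supplies more detail for both steps (the dimension argument for nonvanishing and the arithmetic $c+we=5e-1$ for the weight bound), but the structure is the same.
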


\begin{proof}
Since $\mathcal{H}_{5e-1}$ is a unital subalgebra of $\mathcal{H}_{5e}$, we have $D^{\mu}\downarrow_{\mathcal{H}_{5e-1}} \not=0$; in particular,  $D^{\mu}\downarrow_{C} \not=0$ for some block C of  $\mathcal{H}_{5e-1}$. Clearly, every block of  $\mathcal{H}_{5e-1}$ has weight less than 5, so the result follows. \qed
\end{proof}

\begin{corollary}
Suppose $\lambda$ and $\mu$ are e-regular partitions in B. If there is no block C of $\mathcal{H}_{5e-1}$ such that $D^{\lambda}\downarrow_{C}$ is reducible, then $\textnormal{adj}_{\lambda\mu}=0$.
\end{corollary}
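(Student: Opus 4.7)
The plan is to combine Lemma~3.1 with the lowerable-pair machinery of Proposition~2.20. The statement really concerns the off-diagonal case $\lambda \neq \mu$, since $\textnormal{adj}_{\mu\mu}=1$ by Corollary~2.14, so I will assume $\lambda \neq \mu$ throughout.

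First, apply Lemma~3.1 to the partition $\mu$: this produces a block $C$ of $\mathcal{H}_{5e-1}$ of weight strictly less than $5$ with the property that $D^{\mu}\downarrow_{C} \neq 0$. This is the block that will play the role of $C$ in Proposition~2.20.

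Next, I use the hypothesis on $\lambda$. The assumption says that $D^{\lambda}\downarrow_{C'}$ is not reducible for \emph{any} block $C'$ of $\mathcal{H}_{5e-1}$; in particular, $D^{\lambda}\downarrow_{C}$ is not reducible. Since any finite-dimensional module which is neither zero, simple, nor reducible does not exist, it follows that $D^{\lambda}\downarrow_{C}$ is either zero or simple. Together with $\lambda \neq \mu$ and $D^{\mu}\downarrow_{C}\neq 0$, this is precisely the hypothesis of Proposition~2.20 for the blocks $B$ (of weight $5$) and $C$ (of weight $<5$), so the pair $(\lambda,\mu)$ is lowerable.

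Applying Proposition~2.20 directly yields $\textnormal{adj}_{\lambda\mu}=0$, as required. There is no real obstacle here: the statement is essentially a packaging of Lemma~3.1 together with Proposition~2.20, the only mild subtlety being the tacit exclusion of $\lambda=\mu$ (for which the conclusion would contradict $\textnormal{adj}_{\mu\mu}=1$).
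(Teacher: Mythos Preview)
Your proof is correct and follows essentially the same route as the paper: apply Lemma~3.1 to $\mu$ to obtain a weight-$<5$ block $C$ with $D^{\mu}\downarrow_C\neq 0$, observe that the hypothesis forces $D^{\lambda}\downarrow_C$ to be zero or simple, and invoke Proposition~2.20. Your explicit handling of the $\lambda=\mu$ case is in fact more careful than the paper, which tacitly assumes $\lambda\neq\mu$ when declaring the pair lowerable.
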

\begin{proof}
Suppose $\lambda$ and $\mu$ are as in the statement. By the previous lemma, there is a block C of $\mathcal{H}_{5e-1}$ such that $D^{\mu}\downarrow_{C} \not=0$. By assumption, $D^{\lambda}\downarrow_{C}$ is zero or simple. Therefore $(\lambda,\mu)$ is lowerable and Proposition 2.20 implies that $\textnormal{adj}_{\lambda\mu}=0$. \qed
\end{proof}
The only $e$-regular partitions $\lambda$ in $B$ such that $D^{\lambda}$ is reducible after restricting to some block of $\mathcal{H}_{5e-1}$ are:
\begin{align*}
\bullet& \langle i_{3,2} \rangle, &  i \in \{1,2,\dots,e-1\}.\\
\bullet& \langle i_{2^{2}},j \rangle, &  1\le i \le j-1, e\ge3\\
\bullet& \langle j,i_{2^{2}} \rangle, &  j\le i-2, e\ge3.
\end{align*}
In light of the corollary, we need only consider these rows of the adjustment matrix in order to prove James's Conjecture for the block $B$.\\
The weight $4$ block $C$ with the $\langle 5^{i-1},6,4,5^{e-i-1} \rangle$ notation is the only block of $\mathcal{H}_{5e-1}$ such that $D^{\lambda}\downarrow_{C}$ is reducible. Hence, if $D^{\mu}\downarrow_{D}\not= 0$ for some block $D\not= C$, then $(\lambda,\mu)$ would be lowerable and by Proposition 2.20, $\textnormal{adj}_{\lambda \mu}=0$. Therefore, we may assume that  $D^{\mu}\downarrow_{D}= 0$ for every block $D$ of $\mathcal{H}_{5e-1}$ other than $C$.\\

\subsection{$\lambda=\langle i_{3,2} \rangle$}
By Lemma 2.15 and Corollary 2.19, $\textnormal{adj}_{\lambda \mu}\neq 0\Rightarrow \mu_{1}>\lambda_{1}$ and $\mu>_{P}\lambda$. Those $\mu$ satisfying $\mu_{1}>\lambda_{1}$ and $\mu>_{P}\lambda$ are:
\begin{itemize}
\item$\langle i_{5}\rangle$
\item$\langle i_{4},i+1\rangle$
\item$\langle 0,i_{4} \rangle$
\end{itemize}
Proposition 2.10 gives $\textnormal{adj}_{\lambda \mu}=\textnormal{adj}_{\lambda^{\diamond} \mu^{\diamond}}$. Therefore, we may also assume that $\mu^{\diamond}_{1}> \lambda^{\diamond}_{1}$ and $\mu^{\diamond}>_{P} \lambda^{\diamond}$. We calculate $\lambda^{\diamond}$ and $\mu^{\diamond}$ for all of the pairs above and list the pairs $(\lambda,\mu)$ satisfying these 2 conditions in the following table. 

\begin{table}[H]
\begin{tabular}{|p{1cm}|p{2.5cm}|p{2.5cm}|p{2.5cm}|p{2.5cm}|p{1.3cm}|p{1.3cm}|} \hline
$e$ & $\lambda$ & $\mu$ & $\lambda^{\diamond}$ & $\mu^{\diamond}$ & $d^{(e)}_{\lambda\mu}(v)$ & $d^{(e)}_{\lambda^{\diamond}\mu^{\diamond}}(v)$\\ \hline
2&$\langle1_{3,2}\rangle$&$\langle1_{5}\rangle$&$\langle1_{3,2}\rangle$&$\langle1_{5}\rangle$&$0$&$0$\\ \hline
2&$\langle1_{3,2}\rangle$&$\langle0,1_{4}\rangle$&$\langle1_{3,2}\rangle$&$\langle0,1_{4}\rangle$&$v$&$v$\\ \hline
3&$\langle2_{3,2}\rangle$&$\langle2_{5}\rangle$&$\langle1_{2^{2}},2\rangle$&$\langle1_{3},2_{2} \rangle$&0&\\ \hline
3&$\langle2_{3,2}\rangle$&$\langle0,2_{4}\rangle$&$\langle1_{2^{2}},2\rangle$&$\langle0,1_{2},2_{2} \rangle$&$v$&\\ \hline
4&$\langle3_{3,2}\rangle$&$\langle3_{5}\rangle$&$\langle1_{2^{2}},2\rangle$& $\langle 1_{2},2_{2},3 \rangle$ &$v$&\\ \hline
4&$\langle2_{3,2}\rangle$&$\langle0,2_{4}\rangle$&$\langle0,2_{2^{2}}\rangle$&$\langle0,2_{4}\rangle$&&0\\ \hline
\end{tabular}
\end{table}
We observe that either $d^{(e)}_{\lambda\mu}(v)\in\{0,v\}$ or $d^{(e)}_{\lambda^{\diamond}\mu^{\diamond}}(v)\in\{0,v\}$ (we only need one of them to hold, which is why some entries of the table are left empty) in all cases. Therefore, Corollary 2.17 applies and we conclude that $\textnormal{adj}_{\lambda\mu}=0$ for every $e$-regular $\mu$ when $\lambda$ is of the form $\langle i_{3,2}\rangle$.

\subsection{$\lambda=\langle i_{2^{2}},j \rangle$ or $\langle j,i_{2^{2}} \rangle$}
By combining Proposition 2,10, Lemma 2.15 and Corollary 2.19 as seen in the last subsection, it is only possible for $\textnormal{adj}_{\lambda\mu}\neq0$ when $\lambda\neq\mu$, $\lambda_{1}<\mu_{1}$, $\lambda^{\diamond}_{1}<\mu^{\diamond}_{1}$, $\lambda<_{P}\mu$ and $\lambda^{\diamond}<_{P}\mu^{\diamond}$. Moreover, we may also exclude those cases where $\lambda^{\diamond}$ is of the form $\langle i_{3,2}\rangle$ as this has been dealt with in the previous subsection. We list all the pairs $(\lambda,\mu)$ satisfying these conditions in the table below.
\begin{table}[H]
\begin{tabular}{|p{1cm}|p{2.5cm}|p{2.5cm}|p{2.5cm}|p{2.5cm}|p{1.3cm}|p{1.3cm}|} \hline
$e$ & $\lambda$ & $\mu$ & $\lambda^{\diamond}$ & $\mu^{\diamond}$ & $d^{(e)}_{\lambda\mu}(v)$ & $d^{(e)}_{\lambda^{\diamond}\mu^{\diamond}}(v)$\\ \hline
6&$\langle3_{2^{2}},5\rangle$&$\langle3_{2},4_{2},5\rangle$&$\langle1,3_{2^{2}}\rangle$&$\langle3_{2},4_{2},5\rangle$&$0$&\\ \hline
5&$\langle3_{2^{2}},4\rangle$&$\langle3_{3},4_{2}\rangle$& $\langle2_{2^{2}},3 \rangle$ &$\langle 2_{2},3_{2},4 \rangle$&0&\\ \hline
5&$\langle2_{2^{2}},4\rangle$&$\langle2_{2},3_{2},4\rangle$&$\langle1,3_{2^{2}} \rangle$& $\langle3_{3},4_{2} \rangle$&$v$&\\ \hline
5&$\langle1_{2^{2}},4\rangle$&$\langle1_{2},2_{2},3\rangle$&$\langle 1,4_{2^{2}} \rangle$& $\langle0_{3},4_{2} \rangle$&$0$&\\ \hline
5&$\langle1_{2^{2}},3\rangle$&$\langle1_{2},2_{2},3\rangle$& $\langle2,4_{2^{2}} \rangle$& $\langle0_{3},4_{2} \rangle$&$v$&\\ \hline
4&$\langle1_{2^{2}},3\rangle$&$\langle1_{3},2_{2}\rangle$&$\langle1,3_{2^{2}}\rangle$&$\langle0_{3},2_{2}\rangle$&$v$&\\ \hline
4&$\langle2_{2^{2}},3\rangle$&$\langle2_{3},3_{2}\rangle$&$\langle2_{2^{2}},3\rangle$&$\langle2_{3},3_{2}\rangle$&$v$&\\ \hline
4&$\langle2_{2^{2}},3\rangle$&$\langle0,2_{2},3_{2}\rangle$&$\langle2_{2^{2}},3\rangle$&$\langle0,2_{2},3_{2}\rangle$&$v$&\\ \hline
4&$\langle1_{2^{2}},3\rangle$&$\langle1_{2},2_{2},3\rangle$&$\langle1,3_{2^{2}}\rangle$&$\langle3_{5}\rangle$&$v^{2}$&$3v^{2}$\\ \hline
$e\ge4$&$\langle1,(e-1)_{2^{2}}\rangle$&$\langle0,1,(e-1)_{3}\rangle$&$\langle1_{2^{2}},e-1\rangle$&$\langle0,1_{3},2\rangle$&&\\ \hline
\end{tabular}
\end{table}
We apply Corollary 2.17 successively to conclude that $\textnormal{adj}_{\lambda\mu}=\delta_{\lambda\mu}$ except when:
\begin{align*}
\bullet& (\lambda,\mu)=(\langle1_{2^{2}},3\rangle,\langle1_{2},2_{2},3\rangle)=(\langle1,3_{2^{2}}\rangle^{\diamond},\langle3_{5}\rangle^{\diamond}), &    e=4\\
\bullet& (\lambda,\mu)=(\langle1,(e-1)_{2^{2}}\rangle,\langle0,1,(e-1)_{3}\rangle)=(\langle1_{2^{2}},e-1\rangle^{\diamond},\langle0,1_{3},2\rangle^{\diamond}), &    e\ge4
\end{align*}
The author has not been able to calculate either $\textnormal{adj}_{\lambda\mu}$ or $\textnormal{adj}_{\lambda^{\diamond}\mu^{\diamond}}$ for the first case. This may be explored in a future work. For the rest of the paper, we deal with the case: $$(\lambda,\mu)=(\langle1,(e-1)_{2^{2}}\rangle,\langle0,1,(e-1)_{3}\rangle), e\ge4$$
Let $D$ be the weight 4 block with the $\langle 5^{e-2},6,4\rangle$ notation. We define the partitions $\tilde{\mu}$, $\tilde{\lambda_{0}}$ and $\tilde{\lambda_{1}}$ by the following abacus diagrams:

\setlength{\unitlength}{8mm}
\begin{picture}(4,5)(0,-4)
\thicklines
\put(1.5,-3.5){{\footnotesize $\tilde{\mu}$}}
\put(0,0){\circle*{0.3}}
\put(-0.1,-0.5){\line(1,0){0.2}}
\put(0,-1){\circle*{0.3}}
\put(0,0.5){\line(0,-1){3.5}}
\put(0.5,0){\circle*{0.3}}
\put(0.4,-0.5){\line(1,0){0.2}}
\put(0.5,-1){\circle*{0.3}}
\put(0.5,0.5){\line(0,-1){3.5}}
\put(1,0){\circle*{0.3}}
\put(1,-0.5){\circle*{0.3}}
\put(1,0.5){\line(0,-1){3.5}}
\put(1.5,0){\circle*{0.1}}
\put(1.75,0){\circle*{0.1}}
\put(2,0){\circle*{0.1}}
\put(1.5,-0.5){\circle*{0.1}}
\put(2,-0.5){\circle*{0.1}}
\put(1.75,-0.5){\circle*{0.1}}
\put(2.5,0){\circle*{0.3}}
\put(2.5,-0.5){\circle*{0.3}}
\put(2.5,-2){\circle*{0.3}}
\put(3,0){\circle*{0.3}}
\put(2.4,-1){\line(1,0){0.2}}
\put(2.4,-1.5){\line(1,0){0.2}}
\put(2.5,0.5){\line(0,-1){3.5}}
\put(3,0.5){\line(0,-1){3.5}}
\put(0,0.5){\line(1,0){3}}
\end{picture}
\begin{picture}(4,5)(-3,-4)
\thicklines
\put(1.5,-3.5){{\footnotesize $\tilde{\lambda_{0}}$}}
\put(0,0){\circle*{0.3}}
\put(0,-0.5){\circle*{0.3}}
\put(0,0.5){\line(0,-1){3.5}}
\put(0.5,0){\circle*{0.3}}
\put(0.4,-0.5){\line(1,0){0.2}}
\put(0.5,-1){\circle*{0.3}}
\put(0.5,0.5){\line(0,-1){3.5}}
\put(1,0){\circle*{0.3}}
\put(1,-0.5){\circle*{0.3}}
\put(1,0.5){\line(0,-1){3.5}}
\put(1.5,0){\circle*{0.1}}
\put(1.75,0){\circle*{0.1}}
\put(2,0){\circle*{0.1}}
\put(1.5,-0.5){\circle*{0.1}}
\put(2,-0.5){\circle*{0.1}}
\put(1.75,-0.5){\circle*{0.1}}
\put(2.5,0){\circle*{0.3}}
\put(2.5,-0.5){\circle*{0.3}}
\put(2.5,-1){\circle*{0.3}}
\put(3,-1.5){\circle*{0.3}}
\put(2.9,-1){\line(1,0){0.2}}
\put(2.9,-0.5){\line(1,0){0.2}}
\put(2.9,0){\line(1,0){0.2}}
\put(2.5,0.5){\line(0,-1){3.5}}
\put(3,0.5){\line(0,-1){3.5}}
\put(0,0.5){\line(1,0){3}}
\end{picture}
\begin{picture}(4,5)(-6,-4)
\thicklines
\put(1.5,-3.5){{\footnotesize $\tilde{\lambda_{1}}$}}
\put(0,0){\circle*{0.3}}
\put(0,-0.5){\circle*{0.3}}
\put(0,0.5){\line(0,-1){3.5}}
\put(0.5,0){\circle*{0.3}}
\put(0.4,-0.5){\line(1,0){0.2}}
\put(0.5,-1){\circle*{0.3}}
\put(0.5,0.5){\line(0,-1){3.5}}
\put(1,0){\circle*{0.3}}
\put(1,-0.5){\circle*{0.3}}
\put(1,0.5){\line(0,-1){3.5}}
\put(1.5,0){\circle*{0.1}}
\put(1.75,0){\circle*{0.1}}
\put(2,0){\circle*{0.1}}
\put(1.5,-0.5){\circle*{0.1}}
\put(2,-0.5){\circle*{0.1}}
\put(1.75,-0.5){\circle*{0.1}}
\put(2.5,0){\circle*{0.3}}
\put(2.5,-0.5){\circle*{0.3}}
\put(2.5,-1.5){\circle*{0.3}}
\put(3,-1){\circle*{0.3}}
\put(2.4,-1){\line(1,0){0.2}}
\put(2.9,-0.5){\line(1,0){0.2}}
\put(2.9,0){\line(1,0){0.2}}
\put(2.5,0.5){\line(0,-1){3.5}}
\put(3,0.5){\line(0,-1){3.5}}
\put(0,0.5){\line(1,0){3}}
\end{picture}
\\
By the modular branching rules, $D^{\tilde{\mu}}$ is the only simple module in block $D$ that upon induction to block $B$ has $D^{\mu}$ appearing in its head. Similarly, the only Specht modules in $D$ that upon induction to $B$ have a filtration with a factor of $S^{\lambda}$ are $S^{\tilde{\lambda_{0}}}$ and $S^{\tilde{\lambda_{1}}}$.
By Proposition 2.5, we may write $f_{e-1}(G(\tilde{\mu}))$ in the form 
\begin{equation}
f_{e-1}(G(\tilde{\mu}))=\sum\limits_{\nu} a_{\nu}(v)G(\nu),
\end{equation}
where $a_{\nu}(v)\in\mathbb{N}_{0}[v+v^{-1}]$ for all $\nu$.
\\
If we manage to show that $a_{\lambda}(v)=0$, then we have that 
\begin{equation}
P_{\mathbb{C}}^{\tilde{\mu}}\uparrow_{D}^{B}\cong\bigoplus_{\nu\neq\lambda}a_{\nu}(1)P_{\mathbb{C}}^{\nu}
\end{equation}
since $f_{e-1}$ acts like ($e-1$)-Ind when $v=1$.
Since James's Conjecture holds for blocks of weight four, $$\textbf{e}_{\mathbb{C}}([\![P^{\tilde{\mu}}_{\mathbb{C}}\uparrow^{B}_{D}]\!]^{p})=\textbf{e}_{\mathbb{F}}([\![P^{\tilde{\mu}}_{\mathbb{F}}\uparrow^{B}_{D}]\!]^{p}).$$
By equation (2), the left-hand side is $\sum\limits_{\nu\neq\lambda}a_{\nu}(1)\textbf{e}_{\mathbb{C}}([\![P_{\mathbb{C}}^{\nu}]\!]^{p})$.
On the other hand, the right-hand side contains the term $\textbf{e}_{\mathbb{F}}([\![P_{\mathbb{F}}^{\mu}]\!]^{p})=\textbf{e}_{\mathbb{C}}([\![P_{\mathbb{C}}^{\mu}]\!]^{p})+\textnormal{adj}_{\lambda\mu}\textbf{e}_{\mathbb{C}}([\![P_{\mathbb{C}}^{\lambda}]\!]^{p})$.
Since $\textbf{e}_{\mathbb{C}}$ is injective and the set of $[\![P_{\mathbb{C}}^{\nu}]\!]^{p}$ as $\nu$ runs over all $e$-regular partitions of $n$ is a linearly independent set in $\mathcal{G}^{p}_{n}(\mathbb{C})$, $\textnormal{adj}_{\lambda\mu}$ must be zero. 

\begin{proposition}
$a_{\lambda}(v)=0$.
\end{proposition}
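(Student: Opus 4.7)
The plan is to compute the coefficient of $s(\lambda)$ in $f_{e-1}(G(\tilde{\mu}))$ in two different ways and equate them in order to isolate $a_\lambda(v)$. Expanding $G(\tilde{\mu}) = \sum_{\sigma} d_{\sigma\tilde{\mu}}^{(e)}(v)\, s(\sigma)$ via the LLT algorithm and applying the formula $f_{e-1}(s(\sigma)) = \sum_{\tau} v^{N_{e-1}(\sigma,\tau)} s(\tau)$, the coefficient of $s(\lambda)$ receives contributions only from those $\sigma \in D$ from which $\lambda$ is reached by a single $(e-1)$-bead move. By the combinatorial characterization recalled just before the statement, these are precisely $\tilde{\lambda_0}$ and $\tilde{\lambda_1}$. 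Hence this coefficient equals
$$c_\lambda(v) \ :=\ d_{\tilde{\lambda_0}\tilde{\mu}}^{(e)}(v)\, v^{N_0} \ +\ d_{\tilde{\lambda_1}\tilde{\mu}}^{(e)}(v)\, v^{N_1},$$
where $N_i := N_{e-1}(\tilde{\lambda_i},\lambda)$ is read directly off the abacus displays.

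On the other hand, substituting the unitriangular expansion $G(\nu) = s(\nu) + \sum_{\sigma \triangleleft \nu} d_{\sigma\nu}^{(e)}(v)\, s(\sigma)$ (with $d_{\sigma\nu}^{(e)}(v) \in v\mathbb{Z}[v]$ for $\sigma\neq\nu$, and $d_{\sigma\nu}^{(e)}(v) = 0$ unless $\sigma \trianglelefteq \nu$) into $f_{e-1}(G(\tilde{\mu})) = \sum_\nu a_\nu(v) G(\nu)$ and reading off the coefficient of $s(\lambda)$ yields $a_\lambda(v) + \sum_{\nu \triangleright \lambda} a_\nu(v)\, d_{\lambda\nu}^{(e)}(v)$. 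Equating the two expressions gives
$$a_\lambda(v) \ =\ c_\lambda(v) \ -\ \sum_{\nu \triangleright \lambda} a_\nu(v)\, d_{\lambda\nu}^{(e)}(v),$$
and the task is to show the right-hand side vanishes.

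To execute this, I would (i) compute $c_\lambda(v)$ explicitly by running LLT on $\tilde{\mu}$ and reading $N_0, N_1$ off the abacus; (ii) enumerate the partitions $\nu \triangleright \lambda$ in $B$ with $a_\nu(v) \neq 0$---by the crystal-graph realization of $f_{e-1}$, these $\nu$ are all obtained from $\tilde{\mu}$ by adding a conormal $(e-1)$-node, so there are only a handful of candidates; (iii) compute each surviving $a_\nu(v)$ recursively, starting from the $\triangleright$-maximal $\nu$ (where $a_\nu(v)$ is essentially the monomial $v^{N_{e-1}(\tilde{\mu},\nu)}$) and working downwards by dominance, and compute each relevant $d_{\lambda\nu}^{(e)}(v)$ via LLT; (iv) verify the resulting polynomial identity by matching powers of $v$.

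The main obstacle is steps (ii) and (iii): the combinatorial bookkeeping needed to identify the short list of relevant $\nu$ and to unpack each $a_\nu(v)$ from a recursive LLT/canonical-basis computation. Once that enumeration is in hand, the verification is purely mechanical, because by Proposition 2.5 the polynomials $a_\nu(v)$ lie in $\mathbb{N}_0[v+v^{-1}]$ and the $d_{\lambda\nu}^{(e)}(v)$ lie in $v\mathbb{N}_0[v]$, so both sides of the target identity are non-negative integer combinations of monomials with no sign cancellation. Matching the finitely many resulting monomial coefficients forces $a_\lambda(v) = 0$ as required.
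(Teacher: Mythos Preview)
Your framework agrees with the paper's: compare the coefficient of $s(\lambda)$ in $f_{e-1}(G(\tilde\mu))$ computed via the $s$-basis (only $\tilde\lambda_0,\tilde\lambda_1$ contribute) with that computed via the $G$-basis. But from here you take a much harder road than the paper, and the road is not actually traversed.

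The paper never computes any $a_\nu(v)$ or $d^{(e)}_{\lambda\nu}(v)$ explicitly. Instead it does two things. First, it shows $d^{(e)}_{\tilde\lambda_0\tilde\mu}(v)=0$ by a uniform-in-$e$ argument: successively restrict $S^{\tilde\lambda_0}$ and $D^{\tilde\mu}$ through a chain of weight-$4$ blocks down to a weight-$2$ block $E$, where the product order gives $\hat\mu\le_P\hat\lambda_0$, hence $[S^{\hat\lambda_0}:D^{\hat\mu}]=0$, hence $[S^{\tilde\lambda_0}:D^{\tilde\mu}]=0$. Second, it observes that $f_{e-1}(s(\tilde\lambda_1))=s(\lambda)$ (so your $N_1=0$), whence $c_\lambda(v)=d^{(e)}_{\tilde\lambda_1\tilde\mu}(v)\in v\,\mathbb{N}_0[v]$. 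Now the equation
\[
c_\lambda(v)\;=\;a_\lambda(v)\;+\;\sum_{\nu>\lambda}a_\nu(v)\,d^{(e)}_{\lambda\nu}(v)
\]
has every term on the right with non-negative coefficients, and any nonzero element of $\mathbb{N}_0[v+v^{-1}]$ carries a nonzero coefficient at some $v^k$ with $k\le 0$; this alone forces $a_\lambda(v)=0$. No enumeration of the other $\nu$'s, and no value of any $a_\nu(v)$ or $d^{(e)}_{\lambda\nu}(v)$, is ever needed.

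Your plan instead demands explicit LLT computations of $c_\lambda(v)$, of every relevant $a_\nu(v)$, and of every $d^{(e)}_{\lambda\nu}(v)$---none of which you carry out, and all of which depend on $e$, so the argument as written cannot cover all $e\ge 4$ simultaneously. Step~(ii) is also unjustified: the assertion that the $\nu$ with $a_\nu(v)\ne 0$ are obtained from $\tilde\mu$ by adding a single conormal $(e-1)$-node conflates the crystal operator $\tilde f_{e-1}$ with the Chevalley generator $f_{e-1}$; the direct summands of $P^{\tilde\mu}_{\mathbb{C}}\!\uparrow^B_D$ are indexed by those $\nu$ with $[D^\nu_{\mathbb{C}}\!\downarrow_D:D^{\tilde\mu}_{\mathbb{C}}]>0$, which is in general a larger and less transparent set. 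You record the positivity facts in your last paragraph but do not use them to short-circuit the computation; that is exactly the move the paper makes.
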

\begin{proof}
By the definition of $v$-decomposition numbers, $G(\tilde{\mu})=\sum\limits_{\tilde{\nu}\in D}d^{(e)}_{\tilde{\nu}\tilde{\mu}}(v)s(\tilde{\nu})$.
So, 
\begin{equation}
f_{e-1}(G(\tilde{\mu}))=\sum\limits_{\tilde{\nu}\in D}d^{(e)}_{\tilde{\nu}\tilde{\mu}}(v)f_{e-1}(s(\tilde{\nu})).
\end{equation}
In this sum, only the terms $\tilde{\nu}=\tilde{\lambda_{0}}$ and $\tilde{\nu}=\tilde{\lambda_{1}}$ may contribute to the coefficients of $s(\lambda)$.
Let $D^{(i)}$ be the weight 4 block with the $\langle 5^{i},6,5^{e-i-2},4 \rangle$ notation and $E$ be the weight 2 block with the $\langle 4,5^{e-2},6 \rangle$ notation. We define the partitions $\hat{\mu}$ and $\hat{\lambda_{0}}$ by the abacus diagrams in figure 1 and figure 2. Modular branching rules yield
$$S^{\tilde{\lambda_{0}}}\downarrow^{D}_{D^{(e-3)}}\downarrow^{D^{(e-3)}}_{D^{(e-4)}}\cdots \downarrow^{D^{(3)}}_{D^{(2)}}\downarrow^{D^{(2)}}_{D^{(1)}}\downarrow^{D^{(1)}}_{D^{(0)}}\downarrow^{D^{(0)}}_{E}\sim (S^{\hat{\lambda_{0}}})^{2},$$
$$D^{\tilde{\mu}}\downarrow^{D}_{D^{(e-3)}}\downarrow^{D^{(e-3)}}_{D^{(e-4)}}\cdots \downarrow^{D^{(3)}}_{D^{(2)}}\downarrow^{D^{(2)}}_{D^{(1)}}\downarrow^{D^{(1)}}_{D^{(0)}}\downarrow^{D^{(0)}}_{E}\cong (D^{\hat{\mu}})^{\oplus2}.$$
Using the product order, we see that $\hat{\mu}\le_{P} \hat{\lambda_{0}}$.
Therefore, $$[S^{\hat{\lambda_{0}}}:D^{\hat{\mu}}]=0 \Rightarrow [S^{\tilde{\lambda_{0}}}:D^{\tilde{\mu}}]=0.$$
Hence, $d^{(e)}_{\tilde{\lambda_{0}}\tilde{\mu}}(v)=0$ and we can conclude that the term $\tilde{\nu}=\tilde{\lambda_{0}}$ in (3) has no contribution. Let us now focus our attention on the term $\tilde{\nu}=\tilde{\lambda_{1}}$.
Since $f_{e-1}(s(\tilde{\lambda_{1}}))=s(\lambda)$, the coefficient of $s(\lambda)$ in (3) must be $d^{(e)}_{\tilde{\lambda_{1}}\tilde{\mu}}(v)\in v\mathbb{N}_{0}[v]$. On the other hand, the coefficient of $s(\lambda)$ in (1) is $a_{\lambda}(v)+\sum\limits_{\lambda\le_{J}\nu}a_{\nu}(v)d^{(e)}_{\lambda\nu}(v)$. If $a_{\lambda}(v)\neq 0$, then the coefficient of $s(\lambda)$ in (1) would include either constant terms or negative powers of $v$ since $a_{\nu}(v)\in\mathbb{N}_{0}[v+v^{-1}]$ for all $\nu$. This is a contradiction.

Therefore, $a_{\lambda}(v)= 0$ and $\textnormal{adj}_{\lambda\mu}=0$. \qed
\end{proof}

This ends the proof of Theorem 2.9.

\pagebreak

\begin{picture}(12,12)(-6,-14)
\thicklines
\put(1.5,2){{\footnotesize Figure 1}}
\put(-4.5,1){{\footnotesize $D=D^{(e-2)}$}}
\put(-3.45,-3.5){{\footnotesize $\tilde{\lambda_{0}}$}}
\put(-4.5,0){\circle*{0.3}}
\put(-4.5,-0.5){\circle*{0.3}}
\put(-4.5,0.5){\line(0,-1){3.5}}
\put(-4,0){\circle*{0.3}}
\put(-4.1,-0.5){\line(1,0){0.2}}
\put(-4,-1){\circle*{0.3}}
\put(-4,0.5){\line(0,-1){3.5}}
\put(-3,0){\circle*{0.3}}
\put(-3,-0.5){\circle*{0.3}}
\put(-3,0.5){\line(0,-1){3.5}}
\put(-3,0){\circle*{0.1}}
\put(-2.75,0){\circle*{0.1}}
\put(-2.5,0){\circle*{0.1}}
\put(-3,-0.5){\circle*{0.1}}
\put(-2.5,-0.5){\circle*{0.1}}
\put(-2.75,-0.5){\circle*{0.1}}
\put(-2,0){\circle*{0.3}}
\put(-2,-0.5){\circle*{0.3}}
\put(-2,-1){\circle*{0.3}}
\put(-1.5,-1.5){\circle*{0.3}}
\put(-1.6,-1){\line(1,0){0.2}}
\put(-1.6,-0.5){\line(1,0){0.2}}
\put(-1.6,0){\line(1,0){0.2}}
\put(-2,0.5){\line(0,-1){3.5}}
\put(-1.5,0.5){\line(0,-1){3.5}}
\put(-4.5,0.5){\line(1,0){3}}

\put(0, -1){\vector(1, 0){1.5}}
\put(2,-1){\circle*{0.1}}
\put(2.25,-1){\circle*{0.1}}
\put(2.5,-1){\circle*{0.1}}
\put(3, -1){\vector(1, 0){1.5}}

\put(8,1){{\footnotesize $D^{(2)}$}}
\put(8.05,-3.5){{\footnotesize $\tilde{\lambda_{0}}^{(2)}$}}
\put(7,0){\circle*{0.3}}
\put(7,-0.5){\circle*{0.3}}
\put(7,0.5){\line(0,-1){3.5}}
\put(7.5,0){\circle*{0.3}}
\put(7.4,-0.5){\line(1,0){0.2}}
\put(7.5,-1){\circle*{0.3}}
\put(7.5,0.5){\line(0,-1){3.5}}
\put(8,0){\circle*{0.3}}
\put(8,-0.5){\circle*{0.3}}
\put(8,0.5){\line(0,-1){3.5}}
\put(8.5,0){\circle*{0.1}}
\put(8.75,0){\circle*{0.1}}
\put(9,0){\circle*{0.1}}
\put(8.5,-0.5){\circle*{0.1}}
\put(9,-0.5){\circle*{0.1}}
\put(8.75,-0.5){\circle*{0.1}}
\put(9.5,0){\circle*{0.3}}
\put(9.5,-0.5){\circle*{0.3}}
\put(8,-1){\circle*{0.3}}
\put(10,-1.5){\circle*{0.3}}
\put(9.9,-1){\line(1,0){0.2}}
\put(9.9,-0.5){\line(1,0){0.2}}
\put(9.9,0){\line(1,0){0.2}}
\put(9.5,0.5){\line(0,-1){3.5}}
\put(10,0.5){\line(0,-1){3.5}}
\put(7,0.5){\line(1,0){3}}

\put(8, -4){\vector(0, -1){1.5}}

\put(8,-6){{\footnotesize $D^{(1)}$}}
\put(8.05,-10.5){{\footnotesize $\tilde{\lambda_{0}}^{(1)}$}}
\put(7,-7){\circle*{0.3}}
\put(7,-7.5){\circle*{0.3}}
\put(7,-6.5){\line(0,-1){3.5}}
\put(7.5,-7){\circle*{0.3}}
\put(7.4,-7.5){\line(1,0){0.2}}
\put(7.5,-8){\circle*{0.3}}
\put(7.5,-6.5){\line(0,-1){3.5}}
\put(8,-7){\circle*{0.3}}
\put(7.5,-7.5){\circle*{0.3}}
\put(7.9,-7.5){\line(1,0){0.2}}
\put(8,-6.5){\line(0,-1){3.5}}
\put(8.5,-7){\circle*{0.1}}
\put(8.75,-7){\circle*{0.1}}
\put(9,-7){\circle*{0.1}}
\put(8.5,-7.5){\circle*{0.1}}
\put(9,-7.5){\circle*{0.1}}
\put(8.75,-7.5){\circle*{0.1}}
\put(9.5,-7){\circle*{0.3}}
\put(9.5,-7.5){\circle*{0.3}}
\put(8,-8){\circle*{0.3}}
\put(10,-8.5){\circle*{0.3}}
\put(9.9,-8){\line(1,0){0.2}}
\put(9.9,-7.5){\line(1,0){0.2}}
\put(9.9,-7){\line(1,0){0.2}}
\put(9.5,-6.5){\line(0,-1){3.5}}
\put(10,-6.5){\line(0,-1){3.5}}
\put(7,-6.5){\line(1,0){3}}

\put(6.2, -8){\vector(-1,0){1.5}}

\put(2,-6){{\footnotesize $D^{(0)}$}}
\put(2.05,-10.5){{\footnotesize $\tilde{\lambda_{0}}^{(0)}$}}
\put(1,-7){\circle*{0.3}}
\put(1,-7.5){\circle*{0.3}}
\put(1,-6.5){\line(0,-1){3.5}}
\put(1.5,-7){\circle*{0.3}}
\put(1.4,-7.5){\line(1,0){0.2}}
\put(1,-8){\circle*{0.3}}
\put(1.5,-6.5){\line(0,-1){3.5}}
\put(2,-7){\circle*{0.3}}
\put(1.5,-7.5){\circle*{0.3}}
\put(1.9,-7.5){\line(1,0){0.2}}
\put(2,-6.5){\line(0,-1){3.5}}
\put(2.5,-7){\circle*{0.1}}
\put(2.75,-7){\circle*{0.1}}
\put(3,-7){\circle*{0.1}}
\put(2.5,-7.5){\circle*{0.1}}
\put(3,-7.5){\circle*{0.1}}
\put(2.75,-7.5){\circle*{0.1}}
\put(3.5,-7){\circle*{0.3}}
\put(3.5,-7.5){\circle*{0.3}}
\put(2,-8){\circle*{0.3}}
\put(4,-8.5){\circle*{0.3}}
\put(3.9,-8){\line(1,0){0.2}}
\put(3.9,-7.5){\line(1,0){0.2}}
\put(3.9,-7){\line(1,0){0.2}}
\put(3.5,-6.5){\line(0,-1){3.5}}
\put(4,-6.5){\line(0,-1){3.5}}
\put(1,-6.5){\line(1,0){3}}

\put(0.2, -8){\vector(-1, 0){1.5}}

\put(-4,-6){{\footnotesize $E$}}
\put(-3.95,-10.5){{\footnotesize $\hat{\lambda_{0}}$}}
\put(-5,-7){\circle*{0.3}}
\put(-2,-7){\circle*{0.3}}
\put(-5,-6.5){\line(0,-1){3.5}}
\put(-4.5,-7){\circle*{0.3}}
\put(-4.6,-7.5){\line(1,0){0.2}}
\put(-2,-7.5){\circle*{0.3}}
\put(-4.5,-6.5){\line(0,-1){3.5}}
\put(-4,-7){\circle*{0.3}}
\put(-4.5,-7.5){\circle*{0.3}}
\put(-4.1,-7.5){\line(1,0){0.2}}
\put(-4,-6.5){\line(0,-1){3.5}}
\put(-3.5,-7){\circle*{0.1}}
\put(-3.25,-7){\circle*{0.1}}
\put(-3,-7){\circle*{0.1}}
\put(-3.5,-7.5){\circle*{0.1}}
\put(-3,-7.5){\circle*{0.1}}
\put(-3.25,-7.5){\circle*{0.1}}
\put(-2.5,-7){\circle*{0.3}}
\put(-2.5,-7.5){\circle*{0.3}}
\put(-4,-8){\circle*{0.3}}
\put(-2,-8.5){\circle*{0.3}}
\put(-2.1,-8){\line(1,0){0.2}}
\put(-2.1,-7.5){\line(1,0){0.2}}
\put(-2.1,-7){\line(1,0){0.2}}
\put(-2.5,-6.5){\line(0,-1){3.5}}
\put(-2,-6.5){\line(0,-1){3.5}}
\put(-5,-6.5){\line(1,0){3}}
\end{picture}

\begin{picture}(12,12)(-6,-12)
\thicklines
\put(1.5,2){{\footnotesize Figure 2}}
\put(-4.5,1){{\footnotesize $D=D^{(e-2)}$}}
\put(-3.45,-3.5){{\footnotesize $\tilde{\mu}$}}
\put(-4.5,0){\circle*{0.3}}
\put(-4.6,-0.5){\line(1,0){0.2}}
\put(-4.5,-1){\circle*{0.3}}
\put(-4.5,0.5){\line(0,-1){3.5}}
\put(-4,0){\circle*{0.3}}
\put(-4.1,-0.5){\line(1,0){0.2}}
\put(-4.5,-1){\circle*{0.3}}
\put(-4,-1){\circle*{0.3}}
\put(-4,0.5){\line(0,-1){3.5}}
\put(-3.5,0){\circle*{0.3}}
\put(-3.5,-0.5){\circle*{0.3}}
\put(-3.5,0.5){\line(0,-1){3.5}}
\put(-3,0){\circle*{0.1}}
\put(-2.75,0){\circle*{0.1}}
\put(-2.5,0){\circle*{0.1}}
\put(-3,-0.5){\circle*{0.1}}
\put(-2.5,-0.5){\circle*{0.1}}
\put(-2.75,-0.5){\circle*{0.1}}
\put(-2,0){\circle*{0.3}}
\put(-2,-0.5){\circle*{0.3}}
\put(-2,-2){\circle*{0.3}}
\put(-1.5,0){\circle*{0.3}}
\put(-2.1,-1){\line(1,0){0.2}}
\put(-2.1,-1.5){\line(1,0){0.2}}
\put(-2,0.5){\line(0,-1){3.5}}
\put(-1.5,0.5){\line(0,-1){3.5}}
\put(-4.5,0.5){\line(1,0){3}}

\put(0, -1){\vector(1, 0){1.5}}
\put(2,-1){\circle*{0.1}}
\put(2.25,-1){\circle*{0.1}}
\put(2.5,-1){\circle*{0.1}}
\put(3, -1){\vector(1, 0){1.5}}

\put(8,1){{\footnotesize $D^{(2)}$}}
\put(8.05,-3.5){{\footnotesize $\tilde{\mu}^{(2)}$}}
\put(7,0){\circle*{0.3}}
\put(6.9,-0.5){\line(1,0){0.2}}
\put(7,-1){\circle*{0.3}}
\put(7,0.5){\line(0,-1){3.5}}
\put(7.5,0){\circle*{0.3}}
\put(7.4,-0.5){\line(1,0){0.2}}
\put(7.5,-1){\circle*{0.3}}
\put(7.5,0.5){\line(0,-1){3.5}}
\put(8,0){\circle*{0.3}}
\put(8,-0.5){\circle*{0.3}}
\put(8,0.5){\line(0,-1){3.5}}
\put(8.5,0){\circle*{0.1}}
\put(8.75,0){\circle*{0.1}}
\put(9,0){\circle*{0.1}}
\put(8.5,-0.5){\circle*{0.1}}
\put(9,-0.5){\circle*{0.1}}
\put(8.75,-0.5){\circle*{0.1}}
\put(9.5,0){\circle*{0.3}}
\put(9.5,-0.5){\circle*{0.3}}
\put(8,-2){\circle*{0.3}}
\put(10,0){\circle*{0.3}}
\put(7.9,-1){\line(1,0){0.2}}
\put(7.9,-1.5){\line(1,0){0.2}}
\put(9.5,0.5){\line(0,-1){3.5}}
\put(10,0.5){\line(0,-1){3.5}}
\put(7,0.5){\line(1,0){3}}

\put(8, -4){\vector(0, -1){1.5}}

\put(8,-6){{\footnotesize $D^{(1)}$}}
\put(8.05,-10.5){{\footnotesize $\tilde{\mu}^{(1)}$}}
\put(7,-7){\circle*{0.3}}
\put(6.9,-7.5){\line(1,0){0.2}}
\put(7,-8){\circle*{0.3}}
\put(7,-6.5){\line(0,-1){3.5}}
\put(7.5,-7){\circle*{0.3}}
\put(7.4,-7.5){\line(1,0){0.2}}
\put(7.5,-8){\circle*{0.3}}
\put(7.5,-6.5){\line(0,-1){3.5}}
\put(8,-7){\circle*{0.3}}
\put(8,-7.5){\circle*{0.3}}
\put(8,-6.5){\line(0,-1){3.5}}
\put(8.5,-7){\circle*{0.1}}
\put(8.75,-7){\circle*{0.1}}
\put(9,-7){\circle*{0.1}}
\put(8.5,-7.5){\circle*{0.1}}
\put(9,-7.5){\circle*{0.1}}
\put(8.75,-7.5){\circle*{0.1}}
\put(9.5,-7){\circle*{0.3}}
\put(9.5,-7.5){\circle*{0.3}}
\put(7.5,-9){\circle*{0.3}}
\put(10,-7){\circle*{0.3}}
\put(7.4,-8.5){\line(1,0){0.2}}
\put(9.5,-6.5){\line(0,-1){3.5}}
\put(10,-6.5){\line(0,-1){3.5}}
\put(7,-6.5){\line(1,0){3}}

\put(6.2, -8){\vector(-1, 0){1.5}}

\put(2,-6){{\footnotesize $D^{(0)}$}}
\put(2.05,-10.5){{\footnotesize $\tilde{\mu}^{(0)}$}}
\put(1,-7){\circle*{0.3}}
\put(0.9,-7.5){\line(1,0){0.2}}
\put(1,-8){\circle*{0.3}}
\put(1,-6.5){\line(0,-1){3.5}}
\put(1.5,-7){\circle*{0.3}}
\put(1.4,-7.5){\line(1,0){0.2}}
\put(1.5,-8){\circle*{0.3}}
\put(1.5,-6.5){\line(0,-1){3.5}}
\put(2,-7){\circle*{0.3}}
\put(2,-7.5){\circle*{0.3}}
\put(2,-6.5){\line(0,-1){3.5}}
\put(2.5,-7){\circle*{0.1}}
\put(2.75,-7){\circle*{0.1}}
\put(3,-7){\circle*{0.1}}
\put(2.5,-7.5){\circle*{0.1}}
\put(3,-7.5){\circle*{0.1}}
\put(2.75,-7.5){\circle*{0.1}}
\put(3.5,-7){\circle*{0.3}}
\put(3.5,-7.5){\circle*{0.3}}
\put(1,-9){\circle*{0.3}}
\put(4,-7){\circle*{0.3}}
\put(0.9,-8.5){\line(1,0){0.2}}
\put(3.5,-6.5){\line(0,-1){3.5}}
\put(4,-6.5){\line(0,-1){3.5}}
\put(1,-6.5){\line(1,0){3}}

\put(0.2, -8){\vector(-1, 0){1.5}}

\put(-4,-6){{\footnotesize $E$}}
\put(-3.95,-10.5){{\footnotesize $\hat{\mu}$}}
\put(-5,-7){\circle*{0.3}}
\put(-2,-7.5){\circle*{0.3}}
\put(-5,-6.5){\line(0,-1){3.5}}
\put(-4.5,-7){\circle*{0.3}}
\put(-4.6,-7.5){\line(1,0){0.2}}
\put(-4.5,-8){\circle*{0.3}}
\put(-4.5,-6.5){\line(0,-1){3.5}}
\put(-4,-7){\circle*{0.3}}
\put(-4,-7.5){\circle*{0.3}}
\put(-4,-6.5){\line(0,-1){3.5}}
\put(-3.5,-7){\circle*{0.1}}
\put(-3.25,-7){\circle*{0.1}}
\put(-3,-7){\circle*{0.1}}
\put(-3.5,-7.5){\circle*{0.1}}
\put(-3,-7.5){\circle*{0.1}}
\put(-3.25,-7.5){\circle*{0.1}}
\put(-2.5,-7){\circle*{0.3}}
\put(-2.5,-7.5){\circle*{0.3}}
\put(-2,-8.5){\circle*{0.3}}
\put(-2,-7){\circle*{0.3}}
\put(-2.1,-8){\line(1,0){0.2}}
\put(-2.5,-6.5){\line(0,-1){3.5}}
\put(-2,-6.5){\line(0,-1){3.5}}
\put(-5,-6.5){\line(1,0){3}}
\end{picture}

\begin{table}
\begin{center}
\begin{tabular}{|p{3cm}|p{3.8cm}|p{5cm}|} \hline
$\mu$ & Conditions & $\mu^{\diamond}$ \\ \hline
$\langle i_{3,2}\rangle$ & $i=e-1,e\ge3$ & $\langle 1_{2^{2}},2 \rangle$ \\
 & $1\leq i \leq e-2,e\ge3$ & $\langle 0, (e-i)_{2^{2}} \rangle $ \\ 
 &$i=1,e=2$ &$\langle 1_{3,2}\rangle$\\ \hline
$\langle j,i_{2^{2}} \rangle$ & $j=0,i\ge2,e\ge3$ & $\langle (e-i)_{3,2}\rangle $ \\
& $1\leq j \leq i-2,e\ge4$ & $\langle (e-i)_{2^{2}}, e-j \rangle$ \\ \hline
$\langle i_{2^{2}},j \rangle$& $j\geq i+2\geq3,e\ge4$ & $\langle e-j, (e-i)_{2^{2}} \rangle$ \\
& $j=i+1, i=1,e\ge3$ & $\langle (e-1)_{3,2} \rangle $ \\
& $j=i+1, i\geq 2,e\ge4$ & $\langle (e-i)_{2^{2}}, e-i+1 \rangle$ \\ \hline
$\langle i_{5} \rangle$ &$i=1,e=2$ &$\langle 1_{5} \rangle$ \\
&$i=2,e=3$ &$\langle 1_{3},2_{2} \rangle$ \\
&$i=1,e=3$ &$\langle 0_{3},2_{2} \rangle$ \\
&$i=3,e=4$ & $\langle 1_{2},2_{2},3 \rangle$\\
&$i=2,e=4$ &$\langle 0_{2},2_{2},3 \rangle$ \\
&$i=1,e=4$ &$\langle 0_{2},1_{2},3 \rangle$ \\
& $i=4,e=5$ & $\langle 1_{2},2,3,4 \rangle$ \\
& $i=3,e=5$ & $\langle 0_{2},2,3,4 \rangle$ \\
 & $i=2,e=5$ & $\langle 0_{2},1,3,4 \rangle$ \\
 & $i=1,e=5$ & $\langle 0_{2},1,2,4 \rangle$ \\
& $i=e-1,e\geq6$ & $\langle 1,2,3,4,5 \rangle$\\
& $i=e-2,e\geq6$ & $\langle 0,2,3,4,5 \rangle$ \\
& $i=e-3,e\geq6$ & $\langle 0,1,3,4,5 \rangle$ \\
& $i=e-4,e\geq6$ & $\langle 0,1,2,4,5 \rangle$ \\
& $1\leq i\leq e-5,e\geq6$ & $\langle 0,1,2,3,e-i \rangle$ \\ \hline
$\langle i_{4}, i+1 \rangle$ &$i=1,e=3$ &$\langle 0_{2},2_{2,1} \rangle$ \\
&$i=2,e=4$ &$\langle 0_{2},2_{1^{2}},3 \rangle$ \\
& $i=e-2,e\ge5$ & $\langle 0,2_{1^{2}},3,4\rangle$ \\
& $2\le i\le e-3,e\ge5$ & $\langle 0,1,(e-i)_{1^{2}},e-i+1 \rangle$ \\
& $i=1,e\ge4$ & $\langle 0_{2},1,(e-1)_{1^{2}}\rangle$\\ \hline
$\langle 0, i_{4} \rangle$ & $i=1,e=2$ & $\langle 0,1_{4} \rangle$\\
&$i=2,e=3$ &$\langle 0,1_{2},2_{2} \rangle$ \\
&$i=1,e=3$& $\langle 0_{2},1,2_{2} \rangle$\\
& $i=e-1,e\ge4$ & $\langle 0,1_{2},2,3 \rangle$ \\
& $i=e-2,e\ge4$ & $\langle 0,1,2_{2},3\rangle$ \\
& $ i= e-3,e\ge5$ & $\langle 0,1,2,3_{2}\rangle$ \\
& $2\le i \le e-4,e\ge 6$ & $\langle0,1,2,(e-i)_{2}\rangle$\\
& $i=1,e\ge4$ & $\langle 0,1,2,(e-1)_{2}\rangle$ \\ \hline
$\langle i_{3}, (i+1)_{1^{2}} \rangle$ & $1\le i\le e-2,e\ge3$ & $\langle 0,(e-i-1)_{1^{2}},(e-i)_{2} \rangle$ \\ \hline

\end{tabular}
\end{center}
\end{table}

\begin{table}
\begin{center}
\begin{tabular}{|p{3cm}|p{3.8cm}|p{5cm}|} \hline
$\mu$ & Conditions & $\mu^{\diamond}$ \\ \hline
$\langle i_{3}, (i+1)_{2}\rangle$ & $i=e-2, e\ge7$ & $\langle 2,3,4,5,6 \rangle$ \\
& $i=e-3, e\ge7$ & $\langle 0,3,4,5,6 \rangle$ \\
& $i=e-4, e\ge7$ & $\langle 0,1,4,5,6 \rangle$ \\
& $i\le e-5, e\ge7$ & $\langle 0,1,2,e-i,e-i+1 \rangle$ \\ 
& $i=4, e=6$&$\langle 2_{2},3,4,5\rangle$\\
& $i=3, e=6$&$\langle0_{2},3,4,5\rangle$\\
&$i=2,e=6$&$\langle0_{2},1,4,5\rangle$\\
&$i=1,e=6$&$\langle0_{2},1,2,5\rangle$\\
& $i=3,e=5$&$\langle2_{2},3_{2},4\rangle$\\
&$i=2,e=5$&$\langle0_{2},3_{2},4\rangle$\\
&$i=1,e=5$&$\langle0_{2},1_{2},4\rangle$\\
&$i=2,e=4$&$\langle 2_{3},3_{2} \rangle$\\
&$i=1,e=4$&$\langle 0_{3},3_{2} \rangle$\\
&$i=1,e=3$&$\langle 2_{5} \rangle$\\ \hline
$\langle 0_{2}, (e-1)_{3} \rangle$ & $i=e-1,e\ge3$ & $\langle 0,1_{2,1},2 \rangle$ \\ \hline
$\langle 0_{3}, (e-1)_{2} \rangle$ & $i=e-1, e\ge6$ & $\langle 1_{2},2,3,4 \rangle$ \\ 
&$i=4,e=5$&$\langle1_{2},2_{2},3\rangle$\\ 
&$i=3,e=4$&$\langle1_{3},2_{2}\rangle$\\ 
&$i=2,e=3$&$\langle1_{5}\rangle$\\ \hline
$\langle i_{3}, i+1,i+2\rangle$ & $i=e-3,e\ge6$ & $\langle0,3_{1^{2}},4,5\rangle$ \\
& $3\le i\le e-4,e\ge7$ & $\langle 0,(e-i)_{1^{2}},e-i+1,e-i+2\rangle$ \\ 
& $i=2,e\ge5$ & $\langle 0_{2},(e-2)_{1^{2}},e-1\rangle$\\
&$i=1,e\ge4$ & $\langle 0_{2},(e-1)_{1,2}\rangle$\\ \hline
$ \langle 0,i_{3},i+1\rangle$ & $i=1,e\ge3$ & $\langle 0,1, (e-1)_{3}\rangle$ \\ 
& $2\le i\le e-2,e\ge4$ & $\langle 0, (e-i)_{2,1},e-i+1 \rangle$ \\ \hline
$\langle0,1,i_{3}\rangle$&$i=e-1,e\ge3$&$\langle0,1_{3},2\rangle$\\ 
&$2\le i\le e-2,e\ge4$&$\langle 0,1,(e-i)_{3}\rangle$\\ \hline
$\langle i_{2},(i+1)_{2},i+2\rangle$ & $i=4,e=7$ & $\langle 3_{2},4,5,6\rangle$ \\
&$i=e-3,e\ge 8$&$\langle 3,4,5,6,7\rangle$\\ 
&$i=3,e=7$&$\langle0_{2},4,5,6\rangle$\\
&$i=e-4,e\ge8$&$\langle0,4,5,6,7\rangle$\\
&$3\le i\le e-5,e\ge8$&$\langle0,1,e-i,e-i+1,e-i+2\rangle$\\
&$i=2,e\ge 7$&$\langle0_{2},1,e-2,e-1\rangle$\\ \hline
\end{tabular}
\end{center}
\end{table}

\begin{table}
\begin{tabular}{|p{3cm}|p{3.8cm}|p{5cm}|} \hline
$\mu$ & Conditions & $\mu^{\diamond}$ \\ \hline
$\langle i_{2},(i+1)_{2},i+2\rangle$&$i=2,e=6$&$\langle0_{2},4_{2},5\rangle$\\
&$i=3,e=6$&$\langle3_{2},4_{2},5\rangle$, self-dual\\
&$i=2,e=5$&$\langle3_{3},4_{2}\rangle$\\
&$i=1,e=4$&$\langle3_{5}\rangle$\\ 
&$i=1,e=5$&$\langle0_{3},4_{2}\rangle$\\
&$i=1,e\ge6$&$\langle0_{2},1_{2},e-1\rangle$\\ \hline
$\langle0,i_{2},(i+1)_{2}\rangle$ &$i=e-2,e\ge5$& $\langle0,2_{2},3,4\rangle$\\
&$2\le i\le e-3,e\ge5$&$\langle0,1,(e-i)_{2},e-i+1\rangle$\\
&$i=1,e\ge4$&$\langle0_{2},1,(e-1)_{2}\rangle$\\ 
&$i=2,e=4$&$\langle0,2_{2},3_{2}\rangle$\\ 
&$i=1,e=3$&$\langle0,2_{4}\rangle$\\ \hline
\end{tabular}
\end{table}
\textit{Acknowledgements}. This paper was written under the supervision of Kai Meng Tan at the National University of Singapore. The author would like to thank Prof Tan for his many helpful comments and guidance. The author is also grateful for the financial support given by the Agency for Science, Technology and Research.


\begin{thebibliography}{99}
\bibitem{ariki}
S. Ariki,
On the decomposition numbers of the Hecke algebra of G(m,1,n),
\emph{J. Math. Kyoto Univ.} \textbf{36} 
(1996), 789–808.

\bibitem{branching}
J. Brundan, A. Kleshchev,
Representation theory of the symmetric groups and their double covers,
in: \emph{Groups,
Combinatorics \& Geometry Durham 2001} (pp. 31–53), World Sci. Publishing, River Edge, NJ, 2003.

\bibitem{positive}
J. Chuang, H. Miyachi, K.M. Tan,
Kleshchev's decomposition numbers and branching coefficients in the fock space,
\emph{Trans. Amer. Math. Soc.} \textbf{360} (2008), 1179-1191.

\bibitem{wt 3}
M. Fayers,
Decomposition numbers for weight three blocks of symmetric groups and Iwahori–Hecke algebras,
\emph{Trans. Amer. Math. Soc.} \textbf{360(3)} (2008), 1341-1376.

\bibitem{extended James}
M. Fayers,
An extension of James's Conjecture,
\emph{Int. Math. Res. Notices} (2007), no. 10 Art. ID rnm032.

\bibitem{wt 4}
M. Fayers,
James’s Conjecture holds for weight four blocks of Iwahori–Hecke algebras,
\emph{Journal of Algebra} \textbf{317} (2007), 593–633.

\bibitem{glnq}
G. James,
The decomposition matrices of $GL_{n}(q)$ for $n\le10$,
\emph{Proc. London Math. Soc.} \textbf{60(3)} (1990), 225–265.

\bibitem{Jantzen Schaper}
G. James and A. Mathas,
A \textit{q}-analogue of the Jantzen-Schaper theorem,
\emph{Proc. London Math. Soc.} \textbf{74(3)} (1997), 241-274.

\bibitem{llt}
A. Lascoux, B. Leclerc and J.-Y. Thibon,
Hecke algebras at roots of unity and crystal bases of quantum affine algebras,
\emph{Comm. Math. Phys.} \textbf{181}
(1996), 205–263.

\bibitem{LLT}
B. Leclerc,
Symmetric functions and the Fock space,
\emph{Symmetric Functions 2001: Surveys of Developments and Perspectives}.

\bibitem{mathas book}
A. Mathas,
Iwahori–Hecke Algebras and Schur Algebras of the Symmetric Group,
\emph{University Lecture Series} \textbf{15},
American Mathematical Society, 1999.


\bibitem{Mullineux}
G. Mullineux,
Bijections on p-regular partitions and p-modular irreducibles of the symmetric groups,
\emph{J. London Math. Soc.} \textbf{20(2)} (1979), 60–66.

\bibitem{Richards}
M. Richards,
Some decomposition numbers for Hecke algebras of general linear groups,
\emph{Math. Proc. Cambridge Philos. Soc.} \textbf{119} (1996), 383-402.

\bibitem{Hansen}
S. Ryom-Hansen,
The Schaper formula and the Lascoux, Leclerc and Thibon-algorithm,
\emph{Letters in Mathematical Physics} \textbf{64}
(2003), 213-219.

\bibitem{beyond}
K.M. Tan,
Beyond Rouquier partitions,
\emph{Journal of Algebra} \textbf{321}
(2009), 248-263.

\bibitem{Williamson}
G. Williamson,
Schubert calculus and torsion explosion,
\emph{J. Amer. Math. Soc.} \textbf{30} (2017).
\end{thebibliography}
\end{document}